\documentclass[12pt,a4paper]{amsart}
\usepackage[utf8]{inputenc}
\usepackage{amssymb,amsmath,amsthm, bbm, mathrsfs}
\usepackage[T1]{fontenc}
\usepackage{graphicx}
\usepackage{comment}
\usepackage{color}
\usepackage{t-angles}
\usepackage[framemethod=tikz]{mdframed}
\definecolor{refkey}{rgb}{0,0,1}
\definecolor{labelkey}{rgb}{1,0,0}
\usepackage{cleveref}
\usepackage[top=2cm,left=3cm,right=2cm,bottom=2cm, nohead]{geometry}
\usepackage{times}
\setlength{\headheight}{5pt}
\setlength{\headsep}{20pt}
\setlength{\textheight}{630pt}
\newtheorem{thm}{Theorem}[section]
\newtheorem{prop}[thm]{Proposition}
\newtheorem{exm}[thm]{Example}
\newtheorem{lem}[thm]{Lemma}
\newtheorem{cor}[thm]{Corollary}
\theoremstyle{definition}
\newtheorem{defn}[thm]{Definition}
\newtheorem{rem}[thm]{Remark}
\numberwithin{equation}{section}
%%%%%%%%%%%%%%%%%%%%%%%%%%%%%%%%%%%%%%%%%%%%%%%%
        %% an algebra
             %% short for \chi
 %% coeffs of \piappr'
        %% bounded operators
            %% short for \beta
 %% coeffs of \piappr'

                                    %% Dirac notation <y|A|x>
\newcommand{\C}{\mathbb{C}}         %% complex numbers
         %% real numbers
\newcommand{\Z}{\mathbb{Z}}         %% integers
      %% coproduct factor : a_{(1)}
\newcommand{\cop}{\Delta}           %% coproduct
           %% tensor product
        %% one half
\DeclareMathOperator{\id}{id}       %% identity map
      %% short for \varepsilon

%%%%%%%%%%%%%%%%%%%%%%%%%%%%%%%%%%%%%%%%%%%%%%%%
\parindent 0pt %%%%%%%%%%%%%%%%%%%%%%%%%%%%%%%%%
%%%%%%%%%%%%%%%%%%%%%%%%%%%%%%%%%%%%%%%%%%%%%%%%
\begin{document}
\title{A note on cohomology for multiplier Hopf algebras}
\author[A.\ Sitarz]{Andrzej Sitarz}\thanks{${}^*$Partially supported by NCN grant 2015/19/B/ST1/03098} 
\address{Institute of Physics, Jagiellonian University,
	prof.\ Stanis\l awa \L ojasiewicza 11, 30-348 Krak\'ow, Poland.\newline\indent
	Institute of Mathematics of the Polish Academy of Sciences,
	\'Sniadeckich 8, 00-950 Warszawa, Poland.}
\email{andrzej.sitarz@uj.edu.pl}   
\author[D.\ Wysocki]{Daniel Wysocki}
\address[b]{Department of Mathematical Methods in Physics, Faculty of Physics, University of Warsaw, Pasteura 5, 02-093 Warszawa, Poland}
\email{Daniel.Wysocki@fuw.edu.pl}   
\subjclass[2010]{58B34, 58B32, 46L87} 
\keywords{multiplier Hopf algebra, Hopf-cyclic, modular pair in involution}
%%%%%%%%%%%%%%%%%%%%%%%%%%%%%%%%%%%%%%%%%%%%%%%%%%%% 
\begin{abstract}
In this note we discuss the possibility of constructing the cosimplicial complex for the multiplier Hopf algebras and extending the cyclicity operator to obtain the Hopf-cyclic cohomology for them. We show that the definition of modular pairs in involution for multiplier Hopf algebras and provide the definition of Hopf-cyclic cohomology 
for algebras of functions over discrete groups.
\end{abstract}
\maketitle
%%%%%%%%%%%%%%%%%%%%%%%%%%%%%%%%%%%%%%%%%%
\section{Introduction}
Hopf-cyclic cohomology of Connes and Moscovici \cite{CoMo, CoMo2} has been
the first example of cyclic cohomology with coefficients. Generalized later \cite{HKRS1,HKRS2, JS08} to coefficients valued in certain types of modules 
(stable anti-Yetter-Drinfeld modules) it became an effective tool in the studies
of Hopf algebras. 

In this note we address the problem whether the construction can be extended to
the multiplier Hopf algebras, which are natural generalization of Hopf algebras. We
aim to define the respective cosimplicial objects in the setup of multiplier Hopf
algebras following the definitions of \cite{AvD08, AvD94, DZ99, DvDZ99, KDZ00}.

The note is organized as follows, first we recall basic definitions, then we define the
modular pair in involution for multiplier Hopf algebras and the cosimplicial objects.
Finally, we discuss the cyclicity operator and define the Hopf-cyclic cohomology for
commutative multiplier Hopf algebras of functions over discrete groups. For simplicity we consider algebras over the field of complex numbers (generalization to arbitrary field is in many steps straightforward). Note that in \cite{HMM} a dual point of view has been 
presented, with the cyclic module and cyclic homology in the place of cohomology. 
The difference, however, is that the simplicial object is the usual one, as it involves only
the algebra structure, whereas for the multiplier Hopf algebras, the cosimplicial object
requires more attention as the coproduct is not valued in $H \otimes H$.

\section{Multiplier Hopf algebras.}

Let us recall some basic definitions of multiplier Hopf algebras. A {\em left} multiplier $\alpha$ of  an algebra $A$ is a linear map $\alpha_L: A \to A$ such that $\alpha_L(ab) = \alpha_L(a)b$
for all $a,b \!\in\! A$. Similarly, a {\it right} multiplier $\alpha_R$ satisfies $\alpha_R(ab) = a\alpha_R(b)$. 
A {\em multiplier} is a pair $(\alpha_L, \alpha_R)$ of a left and right multipliers, respectively, such 
that $a \alpha_L(b) = \alpha_R(a)b$ for all $a,b \!\in\! A$. There is a canonical inclusion $\iota$
of $A$ in $M_L(A)$: $\iota(a)(b) = ab$ (and similarly for $M_R(A)$ and $M(A)$).

We define \cite{AvD94} a multiplier Hopf algebra $H$ as a an algebra, 
which is equipped with a comultiplication $\Delta: H \to M(H \otimes H)$ such 
that the following maps (understood as a composition of the elements in the 
multiplier),
\begin{equation}
\begin{split}
&W_R: H \otimes H \ni (a,b) \mapsto \Delta(a)(1 \otimes b) \in H \otimes H,\\
&W_L: H \otimes H \ni (a,b) \mapsto (a \otimes 1) \Delta(b) \in H \otimes H 
\end{split}
\end{equation}
are well defined bijective map, which are coassociative (we refer to \cite{AvD94} 
for details) and $\Delta$ is an algebra morphism in the following sense,
\begin{equation}
W_R(ab,c) = W_R'(a, W_R(b,c)),
\end{equation}
where 
$$ W_R'(a, b \otimes c) = W_R(a,c)(b \otimes 1).$$

We call a multiplier Hopf algebra {\em regular} if $\sigma \circ \Delta$, where 
$\sigma: H \otimes H \to H \otimes H$ is the flip operation, makes $H$ a multiplier 
Hopf algebra again. 

A multiplier  Hopf algebra has the counit $\epsilon\!:\! H \!\to\! \mathbb{C}$, which is
a homomorphism and the antipode $S: H \to M(H)$. If $H$ is regular, then $S$ is 
bijective and thus the image of the antipode is in $H$.

Finally, let us state the {\em extension property} (cf. \cite{AvD94} Proposition A5). 

\begin{lem}\label{exten}
We call a map  $\phi\!:\! A \to M(B)$ non-degenerate if $B$ is generated by $\phi(a)b$ and $b \phi(a)$. 
A non-degenerate homomorphism $\phi\!:\! A \to M(B)$ has a unique extension 
$\tilde \phi\!:\! M(A) \to M(B)$, which is defined by
\begin{equation}
\label{extension}
\tilde \phi(\alpha) x := \phi(\alpha g)h, \qquad \forall \alpha \!\in\! M(A), \forall x \!\in\! B,
\end{equation}
where $x = \phi(g)h$, $g \!\in\! A$, $h \!\in\! B$, by the non-degeneracy of 
$\phi$.
\end{lem}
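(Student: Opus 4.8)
The plan is to construct $\tilde\phi(\alpha)$ as a genuine multiplier of $B$, that is, as a compatible pair of a left and a right multiplier, and then to check that $\alpha \mapsto \tilde\phi(\alpha)$ is the unique homomorphic extension. Since $\phi$ is non-degenerate, every $x \in B$ can be written (in general as a finite sum) in the form $x = \phi(g)h$ with $g \in A$, $h \in B$, and dually as $x = k\,\phi(g')$. Formula \eqref{extension} prescribes the left multiplier $L_\alpha \colon x = \phi(g)h \mapsto \phi(\alpha g)h$, and symmetrically one sets the right multiplier $R_\alpha \colon x = k\,\phi(g') \mapsto k\,\phi(g'\alpha)$. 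These make sense because $A$ is a two-sided ideal in $M(A)$, so $\alpha g \in A$ and $g'\alpha \in A$, whence $\phi(\alpha g)$ and $\phi(g'\alpha)$ lie in $M(B)$.

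First I would settle the only substantial point, namely that $L_\alpha$ is well defined, i.e.\ independent of the representation $x = \sum_i \phi(g_i)h_i$. Suppose $\sum_i \phi(g_i)h_i = 0$; I must show that $y := \sum_i \phi(\alpha g_i)h_i$ vanishes. The key computation uses that $\phi$ is a homomorphism on $A$ together with associativity in $M(A)$: for every $a \in A$,
\[
\phi(a)\,y = \sum_i \phi(a\,\alpha\,g_i)\,h_i = \phi(a\alpha)\sum_i \phi(g_i)\,h_i = 0,
\]
where $a\alpha \in A$ again because $A$ is an ideal. Thus $\phi(A)\,y = 0$. Writing an arbitrary $b \in B$ as $b = \sum_k b_k\,\phi(a_k)$ by the non-degeneracy $B = B\,\phi(A)$, I obtain $b\,y = \sum_k b_k\big(\phi(a_k)\,y\big) = 0$ for all $b \in B$, and faithfulness of the non-degenerate product of $B$ forces $y = 0$. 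The well-definedness of $R_\alpha$ follows by the mirror-image argument, using $B = \phi(A)\,B$. This step, which exploits the non-degeneracy hypothesis and the absence of annihilators in $B$, is the main obstacle; once it is cleared the rest is formal.

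Next I would check that the pair $(L_\alpha, R_\alpha)$ is a multiplier, i.e.\ that $x\,L_\alpha(y) = R_\alpha(x)\,y$ for all $x, y \in B$. Expanding $x = k\,\phi(g')$ and $y = \phi(g)h$ and using that $\phi$ is a homomorphism, both sides reduce to $k\,\phi(g'\alpha g)\,h$, so the compatibility holds. That the resulting map $\tilde\phi \colon M(A) \to M(B)$, $\tilde\phi(\alpha) = (L_\alpha, R_\alpha)$, is an algebra homomorphism is then obtained by unwinding the product in $M(A)$ and in $M(B)$, and $\tilde\phi \circ \iota = \phi$ is immediate since $\alpha = \iota(a)$ gives $\alpha g = ag$.

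Finally, uniqueness is essentially forced by the formula. If $\psi \colon M(A) \to M(B)$ is any non-degenerate homomorphism with $\psi \circ \iota = \phi$, then for $x = \phi(g)h$ one computes $\psi(\alpha)\,x = \psi(\alpha)\,\psi(\iota(g))\,h = \psi\big(\alpha\,\iota(g)\big)\,h = \psi\big(\iota(\alpha g)\big)\,h = \phi(\alpha g)\,h = \tilde\phi(\alpha)\,x$, using $\alpha\,\iota(g) = \iota(\alpha g) \in \iota(A)$. Since such elements $x$ span $B$, this yields $\psi(\alpha) = \tilde\phi(\alpha)$ for every $\alpha \in M(A)$, establishing uniqueness and completing the argument.
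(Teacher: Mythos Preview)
The paper does not supply its own proof of this lemma; it simply records the statement with a reference to \cite{AvD94}, Proposition~A5, where the result is proved. Your argument is correct and is essentially the standard one given by van Daele: construct the left and right multipliers $L_\alpha$, $R_\alpha$ via \eqref{extension}, use non-degeneracy of $\phi$ together with the absence of one-sided annihilators in $B$ to show well-definedness, verify the multiplier compatibility, and read off uniqueness from the formula. One small remark: you silently use that the product on $B$ is non-degenerate (so that $B y = 0$ forces $y = 0$); this is a standing hypothesis in the multiplier-algebra framework but is worth stating explicitly, since the lemma as written does not mention it.
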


We shall also use a mode advanced version of the extension property for 
the regular multiplier Hopf algebras that was proven in \cite{AvD08}.

\begin{lem}\label{ext2}
Let $H$ be a regular multiplier Hopf algebra. Then the  maps $\Delta \otimes \hbox{id}$ and $\hbox{id} \otimes \Delta$ defined on $H \otimes H$ 
have natural extension to maps from $M(H \otimes H)$ to $M(H \otimes H \otimes H)$.
\end{lem}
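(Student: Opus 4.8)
The plan is to deduce both extensions from the basic extension property of Lemma~\ref{exten}. The point is to exhibit $\Delta \otimes \id$ and $\id \otimes \Delta$ as \emph{non-degenerate homomorphisms} from $H \otimes H$ into $M(H \otimes H \otimes H)$; once this is done, Lemma~\ref{exten} applied with $A = H \otimes H$ and $B = H \otimes H \otimes H$ produces the unique extensions to $M(H \otimes H)$, determined by formula~\eqref{extension}, which is exactly the assertion.

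First I would check that $\Delta\colon H \to M(H \otimes H)$ is a non-degenerate homomorphism in the sense of Lemma~\ref{exten}, i.e.\ that $H \otimes H$ is spanned by the elements $\Delta(a)(c \otimes d)$ and by the elements $(c \otimes d)\Delta(a)$. The bijectivity of $W_R$ gives that $\Delta(a)(1 \otimes b)$ already spans $H \otimes H$; multiplying on the right by $c \otimes d$ and using the identity $\Delta(a)(1 \otimes b)(c \otimes d) = \Delta(a)(c \otimes bd)$ together with the idempotency of the product of $H$ shows that the $\Delta(a)(c \otimes d)$ span as well. The left-hand spanning follows symmetrically from the bijectivity of $W_L$. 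The identity map $\id\colon H \to M(H)$ is trivially non-degenerate.

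Next I would form the tensor product. For $a,b \in H$ one has $(\Delta \otimes \id)(a \otimes b) = \Delta(a) \otimes b$, and the two one-sided products
\[
(\Delta(a) \otimes b)(x \otimes y \otimes z) = \Delta(a)(x \otimes y) \otimes bz,
\qquad
(x \otimes y \otimes z)(\Delta(a) \otimes b) = (x \otimes y)\Delta(a) \otimes zb
\]
both lie in $H \otimes H \otimes H$, so $\Delta(a) \otimes b$ is a genuine element of $M(H \otimes H \otimes H)$. Thus $\Delta \otimes \id$ maps $H \otimes H$ into $M(H \otimes H \otimes H)$ and is a homomorphism. Combining the spanning statement for $\Delta$ from the previous step with the fact that $bz$ (resp.\ $zb$) spans $H$ shows that the displayed products span $H \otimes H \otimes H$, so $\Delta \otimes \id$ is non-degenerate. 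The argument for $\id \otimes \Delta$ is identical up to reordering the tensor factors. Lemma~\ref{exten} then yields the required extensions $\widetilde{\Delta \otimes \id},\ \widetilde{\id \otimes \Delta}\colon M(H \otimes H) \to M(H \otimes H \otimes H)$.

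The step needing the most care---and the one where the standing regularity hypothesis of \cite{AvD08} is convenient---is the two-sided non-degeneracy: one must control both the left and the right multiplier actions of $\Delta(a) \otimes b$, and hence both spanning sets for $\Delta$. Regularity guarantees the bijectivity of all the canonical maps built from $\Delta$ (and the bijectivity of the antipode), which is exactly what makes the verification of the left- and right-hand spanning, and the resulting well-definedness of the products above, uniform; it is also what is needed for the two extensions to be mutually compatible (coassociativity at the level of $M(H \otimes H \otimes H)$).
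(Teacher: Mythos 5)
Your proof is correct. Note that the paper itself gives no proof of this lemma---it is quoted from \cite{AvD08}---so there is nothing internal to compare against; but your route (exhibit $\Delta \otimes \id$ and $\id \otimes \Delta$ as non-degenerate homomorphisms from $H \otimes H$ into $M(H \otimes H \otimes H)$ and invoke Lemma~\ref{exten}) is precisely the standard argument, and it is the same mechanism the paper itself unfolds concretely later: the formula \eqref{defdelta} in the proof of the proposition on extending the coface maps is exactly the extension formula \eqref{extension} applied to $\Delta \otimes \id$, written out via the factorization $a_0 \otimes a_1 = \sum_j \Delta(r_j)(s_j \otimes 1)$. One small inaccuracy in your closing paragraph: the two-sided non-degeneracy does not really rest on regularity, since the paper's definition of a multiplier Hopf algebra already demands bijectivity of \emph{both} $W_R$ and $W_L$, which is what your spanning arguments actually use (together with $H^2 = H$); regularity proper is the additional requirement that $\sigma \circ \Delta$ again gives a multiplier Hopf algebra (equivalently, that expressions such as $\Delta(a)(b \otimes 1)$ also lie in $H \otimes H$, and that $S$ is bijective), and it is a standing hypothesis here rather than the crux of this particular lemma.
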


For the rest of the paper we restrict ourselves to regular multiplier Hopf algebras only.

\subsection{Modules over multiplier Hopf algebras}

We define a module, $M$ (left module, right module, bimodule) over a multiplier 
Hopf algebra in the usual sense (see \cite{DvDZ99} for details). 

Each multiplier Hopf algebra is a bimodule over itself with  left and right multiplication. Additionally, since the adjoint action makes sense for the multiplier 
Hopf algebras we have the following left module structure of $H$:
\begin{align}
\hbox{Ad}: a \otimes x  \to a_{(1)} x S(a_{(2)}), \qquad a \in H, 
\end{align}
and makes $H$ again a left $H$-module algebra.

We want to remark, however, that it might be reasonable to consider extended
modules in the sense of van Daele \cite{AvD08}:

\begin{defn}
Let $M$ be a left module over $H$, which is non-degenerate in the sense that if $ax=0$
for $x \in M$ and all $a \in H$ then $x=0$ Consider a space $M'$ of all linear maps 
$\rho: H \ to M$ such that $\rho(ab) = a \rho(b)$, for all $a,b \in H$. Then $M'$ has
a natural left-module structure over $H$ and $x \to \rho_x$, where $\rho_x(a) = ax$ 
is an injective embedding of $m$ in $M'$.
\end{defn}

\subsection{Comodules over multiplier Hopf algebras}

We follow the definitions of \cite{DZ99, KDZ00}. Let $M$ be a vector space. We call $M$ a right comodule 
and $u$ a {\it RR-corepresentation of $H$} if $u$ is injective map $u: M \otimes H \to M \otimes H$ 
that satisfies
\begin{align}
u^{12} u^{13} W_R^{23} = W_R^{23} u^{12}, \label{rr}
\end{align}
where $u^{12}$, for example,  denotes the application of the $u$ map on the 
respective first and second component of the tensor product. Similarly, $M$ is a right comodule
with a {\it RL-corepresentation} if an injective map $v: M \otimes H \to M \otimes H$ satisfies
\begin{align}
v^{12} v^{13} W_L^{23} = W_L^{23} v^{13}. \label{ll} 
\end{align}

For simplicity we shall use name of RR and RL-comodules.
If $u$ ($v$) are bijective then the corepresentation is called {\em regular}.
Finally, we say $M$ is a right comodule if it is RR- and RL-comodule, and
$$ (1 \otimes a) u(m,b) = v(m,a) (1 \otimes b), \quad \forall a,b \in H.  $$  

For regular multiplier Hopf algebras every right comodule has necessarily regular
RR and RL corepresentation and, conversely, every regular RR (RL) corepresentation
gives rise to a right comodule (proposition 2.0 \cite{DZ99}).

\subsection{One-dimensional comodules}

A special and relevant example is given by a $H$-comodule given by a base field, 
$\C$  of the Hopf algebra $H$. Then, the coactions reduce to the maps $H \to H$ 
with certain properties.

\begin{lem}
Let $H$ be a multiplier Hopf algebra. If $(\C, u)$, where $u: H \to H$, 
is a RR-comodule over $H$, then $u$ is a left multiplier of $H$. Analogously, 
if $(\C, v)$ is a $RL$-comodule over $H$, then $v$ is a right multiplier of $H$.
\end{lem}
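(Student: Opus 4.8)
The plan is to specialise the corepresentation identity \eqref{rr} to $M=\C$ and then remove the two tensor legs one at a time with the counit, using injectivity of $u$ to fix the normalisation.

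First I would identify $\C\ox H\cong H$, under which $u^{12}$ becomes $u\ox\id$, $u^{13}$ becomes $\id\ox u$, and $W_R^{23}$ becomes $W_R$. Since $W_R(a\ox b)=\cop(a)(1\ox b)=\co{a}{1}\ox\co{a}{2}b$ genuinely lands in $H\ox H$ (even though $\cop(a)\in M(H\ox H)$), \eqref{rr} collapses to the honest identity in $H\ox H$
\[ u(\co{a}{1})\ox u(\co{a}{2}b)\;=\;\cop\bigl(u(a)\bigr)(1\ox b),\qquad a,b\in H, \]
equivalently $(u\ox u)W_R=W_R(u\ox\id)$.

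Next I would contract the second leg by applying $\id\ox\epsilon$. As $\epsilon$ is a homomorphism and $(\id\ox\epsilon)\cop=\id$, the right-hand side becomes $u(a)\epsilon(b)$; pulling the resulting scalars through the linear map $u$, the left-hand side becomes $u\bigl((\id\ox\psi)(\cop(a)(1\ox b))\bigr)$ with $\psi:=\epsilon\circ u$. Injectivity of $u$ --- the one piece of the corepresentation axioms beyond \eqref{rr} --- then gives $(\id\ox\psi)(\cop(a)(1\ox b))=a\,\epsilon(b)$. Applying $\epsilon$ once more and using $\sum\epsilon(\co{a}{1})\co{a}{2}=a$ yields $\psi(ab)=\epsilon(ab)$; since the product on $H$ is non-degenerate with $H=H^{2}$, this forces $\psi=\epsilon$, i.e. the corepresentation is automatically counital, $\epsilon\circ u=\epsilon$.

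With this normalisation in hand I would contract the first leg by applying $\epsilon\ox\id$ to the reduced identity: the right-hand side collapses to $u(a)b$, while the left-hand side equals $\sum\epsilon(u(\co{a}{1}))\,u(\co{a}{2}b)=\sum\epsilon(\co{a}{1})\,u(\co{a}{2}b)=u(ab)$, whence $u(ab)=u(a)b$ and $u$ is a left multiplier. The RL case is the mirror image: \eqref{ll} reduces to $(v\ox v)W_L=W_L(\id\ox v)$ with $W_L(a\ox b)=a\co{b}{1}\ox\co{b}{2}$; contracting the first leg (again via injectivity) gives $\epsilon\circ v=\epsilon$, and contracting the second leg gives $v(ab)=a\,v(b)$, i.e. $v$ is a right multiplier. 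I expect the only real obstacle to be bookkeeping rather than substance: one must keep every Sweedler expression covered so that $\epsilon$ is applied to genuine elements of the non-unital algebra $H$ rather than to bare multipliers, and one must invoke injectivity of $u$ at exactly the right moment to upgrade the contracted identities to the counital normalisation. Granting the standard covering calculus of \cite{AvD94,DZ99}, the remaining manipulations are routine.
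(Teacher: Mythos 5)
Your proof is correct, but it takes a genuinely different route from the paper's. Both begin with the same reduction of \eqref{rr} under $\C\otimes H\cong H$ to the identity $(u\otimes u)W_R = W_R(u\otimes \id)$, but from there the paper never contracts with the counit: it invokes the algebra-morphism property $W_R(ab,c)=W_R'(a,W_R(b,c))$ to produce the chain
$$W_R(u(ab),c)=(u\otimes u)W_R'(a,W_R(b,c))=W_R'(u(a),W_R(b,c))=W_R(u(a)b,c),$$
and then cancels $W_R$ using its injectivity; the counit, the injectivity of $u$, and your intermediate normalisation $\epsilon\circ u=\epsilon$ play no role in that argument. Your proof instead relies on the counit identities, on $H^2=H$, and---crucially---on the injectivity of $u$, which is indeed part of the paper's definition of a corepresentation but is never invoked in its proof. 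Each approach buys something: yours works with honest elements of $H\otimes H$ throughout (every contraction is applied to $\Delta(a)(1\otimes b)\in H\otimes H$), it yields the extra fact that one-dimensional corepresentations are automatically counital, and it avoids the paper's middle equality $(u\otimes u)W_R'(a,\xi)=W_R'(u(a),\xi)$, which silently extends \eqref{rr} from elements covered as $\Delta(x)(1\otimes c)$ to general covers $\Delta(x)(b\otimes c)$; the paper's proof, in turn, is shorter, uses only the comultiplicative structure and the bijectivity of the $W$-maps, and dualises to the $W_L$ case by the identical two-line computation. Your handling of the RL case is likewise correct: \eqref{ll} reduces to $(v\otimes v)W_L=W_L(\id\otimes v)$, and the mirrored contractions give $\epsilon\circ v=\epsilon$ and then $v(ab)=a\,v(b)$, matching the paper's conclusion by a different mechanism.
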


\begin{proof}
Using (\ref{rr}) and the fact that $\Delta$ is an algebra homomorphism we have for arbitrary 
$a,b,c \in H$:
$$ 
\begin{aligned}
& W_R(u(ab), c) = (u \otimes u) W_R(ab,c) =  (u \otimes u) W_R'(a, W_R(b,c)) \\
& = W_R'(u(a), W_R(b,c)) = W_R(u(a)b, c),
\end{aligned} $$
and since $W_R$ is bijective then
$$ u(ab) = u(a)b, \;\; \forall a,b \in H.$$
Similarly from (\ref{ll}) and coassociativity we have:
$$ 
\begin{aligned}
& W_L(c, v(ab)) =  (v \otimes v) W_L(c, ab) =  (v \otimes v) W_L'(W_L(c,a),b) \\
&= W_L'(W_L(c,a),v(b)) = W_L(c, av(b)),
\end{aligned} 
$$	
and as a consequence 
$$ v(ab) = a v(b), \;\; \forall a,b, \in H.$$
\end{proof}

Hereafter, we will call such a comodule a one-dimensional comodule. Observe that
a generalization to an arbitrary field is straightforward.

\begin{cor}
Let $H$ be a multiplier Hopf algebra over $\C$. The one-dimensional 
right comodule over $H$ is given by a multiplier $u$ of $H$.
\end{cor}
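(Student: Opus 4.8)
The plan is to combine the preceding Lemma with the definition of a right comodule. The corollary asserts that a one-dimensional right comodule over $H$ is given by a single multiplier $u$ of $H$. Since a right comodule is by definition simultaneously an RR-comodule (via some $u: H \to H$) and an RL-comodule (via some $v: H \to H$) satisfying the compatibility condition $(1 \otimes a) u(m,b) = v(m,a)(1 \otimes b)$, the strategy is to apply the Lemma to both corepresentations and then show that the compatibility condition forces $u_L := u$ and $v_R := v$ to assemble into a single multiplier of $H$.

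First I would invoke the Lemma directly: since $(\C, u)$ is an RR-comodule, $u$ is a left multiplier of $H$, i.e.\ $u(ab) = u(a)b$ for all $a,b \in H$; and since $(\C, v)$ is an RL-comodule, $v$ is a right multiplier, i.e.\ $v(ab) = a\,v(b)$. So I already have the two halves $\alpha_L := u$ and $\alpha_R := v$ of a candidate multiplier. The content of the corollary is then to produce the single multiplier structure, which by the definition recalled at the start of the paper requires verifying the linking relation $a\,\alpha_L(b) = \alpha_R(a)\,b$ for all $a,b \in H$.

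Next I would extract this linking relation from the comodule compatibility condition. Specialized to the one-dimensional case $M = \C$, the maps $u,v: \C \otimes H \to \C \otimes H$ are identified with maps $H \to H$, and the condition $(1 \otimes a)u(m,b) = v(m,a)(1 \otimes b)$ collapses (after dropping the trivial $\C$-leg and reading off the $H$-component) to a relation of the form $a\,u(b) = v(a)\,b$, which is exactly $a\,\alpha_L(b) = \alpha_R(a)\,b$. Thus $(\alpha_L, \alpha_R) = (u, v)$ is a genuine multiplier of $H$ in the sense defined in Section~2, and conversely any multiplier of $H$ manifestly yields both an RR- and an RL-corepresentation on $\C$ satisfying the required compatibility, establishing the bijective correspondence.

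The main obstacle I anticipate is purely bookkeeping rather than conceptual: one must track carefully how the leg-numbered relations \eqref{rr} and \eqref{ll}, and especially the compatibility $(1 \otimes a)u(m,b) = v(m,a)(1 \otimes b)$, degenerate when $M = \C$, since the tensor factor $M \otimes H$ becomes $\C \otimes H \cong H$ and the $u^{12}, u^{13}$ placements must be reinterpreted consistently. In particular one must check that the $u$ appearing in the RR-condition and the $v$ appearing in the RL-condition are indeed the two components of a single pair, so that the identification of $u$ with $\alpha_L$ and $v$ with $\alpha_R$ is the correct one; getting the left/right multiplier roles matched to the correct legs is where the argument can silently go wrong, and I would make that matching explicit rather than leave it implicit.
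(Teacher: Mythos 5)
Your proposal is correct and is exactly the argument the paper intends: the corollary is stated without explicit proof as an immediate consequence of the preceding lemma (giving $u$ as a left multiplier and $v$ as a right multiplier) together with the compatibility condition $(1 \otimes a)u(m,b) = v(m,a)(1 \otimes b)$ in the definition of a right comodule, which for $M = \C$ collapses to the linking relation $a\,u(b) = v(a)\,b$ defining a multiplier pair. Your explicit attention to how the legs degenerate when $M = \C$ is a sound precaution, but there is no divergence from the paper's route.
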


Notice that for regular multiplier Hopf algebras the extension property (\ref{extension}) allows us to extend the coproduct  
to $\Delta\!:\! M(H) \to M(H \otimes H)$ (respectively one can use one-sided multipliers instead). Thus, in case of a one-dimensional 
right-right (right-left) comodule we have the following property.

\begin{lem}
A one-dimensional right-right (left-right) comodule over a MHA is determined
by a group like left (right) multiplier $u (v)$:
$$ \Delta u = u \otimes u. $$
\end{lem}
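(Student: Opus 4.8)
The plan is to specialize the defining corepresentation identity (\ref{rr}) to the one-dimensional situation $M = \C$ and then read off the group-like property directly, using the extension of $\Delta$ to the multiplier algebra.

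First I would note that with $M = \C$ the tensor factor carrying $M$ disappears, so the threefold space $M \otimes H \otimes H$ collapses to $H \otimes H$. Under this identification the operator $u^{12}$ becomes $u \otimes \id$ (the left multiplier $u$ acting in the first leg), $u^{13}$ becomes $\id \otimes u$, and $W_R^{23}$ becomes $W_R$ itself. Hence (\ref{rr}) reduces to the identity of maps $H \otimes H \to H \otimes H$ given, for all $a,b \in H$, by
$$(u \otimes \id)(\id \otimes u)\bigl[\Delta(a)(1 \otimes b)\bigr] = \Delta(u(a))(1 \otimes b).$$
Since by the preceding lemma $u$ is a left multiplier, both $u \otimes \id$ and $\id \otimes u$ are left multipliers of $H \otimes H$, and their composition is the left multiplier $u \otimes u$, acting on simple tensors by $(u\otimes u)(x \otimes y) = u(x) \otimes u(y)$. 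So the identity becomes $(u \otimes u)\bigl[\Delta(a)(1 \otimes b)\bigr] = \Delta(u(a))(1 \otimes b)$ for all $a,b \in H$.

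The key step is to recognize the right-hand side through the extension of the coproduct. Because $\Delta\colon H \to M(H \otimes H)$ is non-degenerate, Lemma~\ref{exten} provides the extension $\Delta\colon M(H) \to M(H \otimes H)$ characterised by (\ref{extension}), namely $\Delta(\alpha)\bigl[\Delta(g)h\bigr] = \Delta(\alpha g)h$ for $g \in H$, $h \in H \otimes H$. Applying this with $\alpha = u$, $g = a$, $h = 1 \otimes b$, and using $ua = u(a)$ in $M(H)$, I obtain $\Delta(u)\bigl[\Delta(a)(1 \otimes b)\bigr] = \Delta(u(a))(1 \otimes b)$. Comparing this with the reduced corepresentation identity gives
$$\Delta(u)\bigl[\Delta(a)(1 \otimes b)\bigr] = (u \otimes u)\bigl[\Delta(a)(1 \otimes b)\bigr], \qquad \forall a,b \in H.$$

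Finally, since $W_R$ is bijective the elements $\Delta(a)(1 \otimes b) = W_R(a,b)$ span $H \otimes H$, so the two left multipliers $\Delta(u)$ and $u \otimes u$ agree on a generating set and therefore coincide, yielding $\Delta u = u \otimes u$. The right-left case is entirely analogous: starting from (\ref{ll}) with $v$ a right multiplier, the same collapse and the extension of $\Delta$ give $\Delta v = v \otimes v$. The main obstacle I anticipate is purely a matter of bookkeeping, namely making the collapse of the leg-numbered identity (\ref{rr}) to $H \otimes H$ precise and verifying that the multiplier extension of $\Delta$ is applied only to elements of the admissible form $\Delta(g)h$, so that the defining relation (\ref{extension}) is legitimately invoked.
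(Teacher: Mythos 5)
Your proposal is correct and follows essentially the same route as the paper's own proof: specialize the corepresentation identity (\ref{rr}) at $M=\C$ to get $(u\otimes u)W_R(a,b)=W_R(u(a),b)$, identify $W_R(ua,b)=\Delta(ua)(1\otimes b)=\Delta(u)W_R(a,b)$ via the extension property (\ref{extension}), and conclude from the bijectivity of $W_R$ that the two (left) multipliers $\Delta(u)$ and $u\otimes u$ agree on a spanning set of $H\otimes H$. The paper's proof is just a more condensed version of the same argument, likewise deferring the symmetric right-multiplier case to an analogous computation with $W_L$.
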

\begin{proof}
Since the maps $W_R$ and $W_L$ are bijective then $H \otimes H$ is
spanned by $\Delta(a) (b \otimes c)$ and $(a \otimes b) \Delta(c)$. Therefore
from the extension property we see that $\Delta$ extends as a linear map from $L(H)$ ($R(H)$) to $L(H \otimes H)$ ($R(H \otimes H)$). As in both cases the tensor
product of the multipliers is included in the multiplier of the tensor product we
see that the expressions like $\Delta(u) = u \otimes u$ makes sense. Let us
compute it (for the left multiplier alone). First of all, by definition:
$$ W_R(ua,b) = \Delta(ua)(1 \otimes b) = \Delta(u) W_R(a,b).$$ 
Rewriting the condition (\ref{rr}) we have:
$$ W_R(ua,b) = (u \otimes u) W_R(a,b). $$
Therefore
$$ \Delta(u) = u \otimes u,$$
in the sense of the unique extension of $\Delta$ map to the (left) multiplier
\end{proof}
%These are the group-like conditions:
%$$ (u \otimes u) (\Delta(a)(1 \otimes b)) =  (\Delta u(a) )(1 \otimes b),$$
%$$ (u \otimes u) W_R(a,b)  =  W_R(u(a),b),$$
%$$ (u \otimes u) ((a \otimes 1 )\Delta(b)) = (a \otimes 1) (\Delta u(b)),$$
%$$  (u \otimes u) W_L(a,b) = W_L(a,u(b)). $$

\begin{rem}
The above construction extends nicely to the finite-dimensional case. Any finite-dimensional right comodule over a multiplier Hopf algebra (of 
dimension $N$) is a matrix of multipliers $u_{ij}$, $i,j =1,\ldots,N$ which 
satisfies:
$$ \Delta(u_{ij}) = \sum_{k=1}^N u_{ik} \otimes u_{kj}. $$
\end{rem}

\begin{exm}
Consider an algebra of complex valued functions with finite support over 
a discrete group $G$, which is a typical simplest example of a regular 
commutative multiplier Hopf algebra. We have for the generating functions 
$e_p, e_h$, $p,h \in G$:
$$ W_R(e_p, e_h)  = e_{ph^{-1}} \otimes e_h, \;\;\;\; 
   W_L(e_p, e_h)  = e_{p} \otimes e_{p^{-1}h}.
$$   
If we set the multiplier $u$ by defining it as:
$$ u(e_g) = f(g) e_g. $$
then we see that the condition that $u$ defines a coassociative right 
comodule becomes:
$$ f(gh) = f(g) f(h). $$
Observe that since the algebra of functions is commutative then we
necessarily have for a nonzero functions:
$$ f(ghg^{-1}h^{-1}) = f(e) = 1, $$
where $e$ is the neutral element of $G$.
A typical case is $G= \mathbb{Z}$ where the group-like multipliers
are given by exponential function:
$$ u_\alpha (e_m) = e^{\alpha m} e_m, $$
which satisfies the equation (\ref{rr}).
\end{exm}

\section{Modular pair in involution for multiplier Hopf algebras}

Let us recall that a modular pair in involution \cite{CoMo} for a Hopf algebra $H$ is a pair 
$(\delta, \sigma)$, where $\delta: H \to \C$ is a character on $H$ and $\sigma \in H$ 
is a group-like element, i.e. $\Delta(\sigma) = \sigma \otimes \sigma$, satisfying $\delta(\sigma) = 1$, such that $S^2_{(\delta,\sigma)}(h) = \sigma h \sigma^{-1}$ for every $h \in H$, where 
$S_{(\delta,\sigma)}(h) := \delta(h_{(1)}) S(h_{(2)})$ is the twisted antipode.

For a multiplier Hopf algebra we propose the following definition.
\begin{defn}
Let $H$ be a regular multiplier Hopf algebra. We say that $(\delta,\sigma)$ is 
a modular pair in involution if $\delta$ is a character of $H$, $\sigma \in M(H)$ 
is group-like, so that $\Delta(\sigma) = \sigma \otimes \sigma$, $\delta(\sigma)=1$ 
and the map $S_{(\delta, \sigma)}$ is defined on $H$ in the following way,
\begin{equation}
  \delta(a) S_{(\delta, \sigma)}(h) := (\delta \otimes S)W_L(a, h), \;\; a,h\in H,
  \label{twiant}
 \end{equation} 
and satisfies
\begin{equation}
S^2_{(\delta,\sigma)}(h) = \sigma h \sigma^{-1}. \label{twiant2}
\end{equation} 

Note, that $S_{(\delta, \sigma)}$ does not depend on $a$ and $\sigma^{-1}$ is 
a unique element of the multiplier, in fact:
$$ \sigma^{-1} = S (\sigma). $$
\end{defn}

First of all we need to show that the definition is self-consistent. Due to the
extension property $\delta$ extends to $M(H)$ so we can require that 
$\delta(\sigma)$ is $1$. Further, as the multiplier Hopf algebra is assumed to be
regular we know that by construction $S_{(\delta, \sigma)}(h) \in H$. It remain
only to verify that the definition of $S_{(\delta, \sigma)}(h)$ does not depend 
on $a$. 

We use here the property of regular multiplier Hopf algebras, which guarantees that for every finite set of elements $a_i$ there exists a common local unit (left and right), that is an element $e$ such that $a_i e = a_i$ (respectively, $a_i = e a_i$) for each $i$. 
(see \cite{DvDZ99} Proposition 2.2).  Using this and the identity:
$$ W_L(ae,b) = (a \otimes 1) W_L(e,b), $$ 
we immediately have that for any two different $a_1$ and $a_2$ (such that $\delta(a_1) \delta(a_2) \not= 0$) choosing a suitable $e$ we have the right-hand  side of (\ref{twiant}):
$$ 
\begin{aligned}
S_{(\delta, \sigma)}(h) &= \frac{1}{\delta(a_1)} (\delta \otimes S)W_L(a_1, h) = (\delta \otimes S)(a_1 \otimes 1) W_L(e, h) \\
&= \frac{\delta(a_1)}{\delta(a_1)} (\delta \otimes S) W_L(e, h) =   \frac{\delta(a_2)}{\delta(a_2)} (\delta \otimes S) W_L(e, h) \\
&= \frac{1}{\delta(a_2)}  (\delta \otimes S)(a_2 \otimes 1) W_L(e, h)  =\frac{1}{\delta(a_2)}  (\delta \otimes S)W_L(a_2, h). 
\end{aligned}
$$
%%%%%%%%%%%%%%%%%%%%%%%%%%%%%%%%%%%%%%%%%%%%%%%%%%%%%%%%%%%%%%%%%%%%%%%%%%
\begin{exm}
Consider again a canonical example of the algebra of functions with finite 
support on a discrete  group $G$. As the algebra is commutative a character 
of the algebra is given by an element of the group $g$, $\delta_g$. A group-like multiplier $\sigma$ (which we have studied in the previous example) is defined by a multiplicative morphism $f$ from $G$ to the field, here additionally we need to add the condition that $f(g) = 1$. 

Using the basis of the generating functions, we have:
$$ \delta_g(e_h) = \delta_{gh}, \qquad \sigma e_h = f(h) e_h. $$
The twisted antipode becomes:
$$ S_{(\delta,\sigma)} e_h =  (e_{h^{-1}g}). $$

Let us note, however, that the twisted antipode will satisfy the condition
$$ S_{(\delta,\sigma)}^2 = \hbox{id}, $$
only if $G$ is abelian or if $g=e$, in the latter case $\delta$ is the counit and
the twisted antipode is just $S$. 
\end{exm}	

\begin{exm}
We shall consider a genuine noncommutative and noncocommutative example 
based on the multiplier Hopf algebra acting on the double noncommutative torus \cite{HM}. The explicit description through the generators and relations and $W_L, W_R$ maps was provided in \cite{SiEq}, were we provide a slightly different approach and 
start already with the multiplier algebra and the extension of the coproduct map. Let $A=C(\Z^2)$, and $G=\Z_2$ with generator $x$, $x^2=1$. Consider the algebra 
$A \otimes \C G$, however, with a slightly modified product between $A$ and $x$ 
and  coproduct of $x$:
\begin{equation}
f x = x \hat{f}, \qquad \cop(x) = e^{i\theta(i,j)} x \otimes x,
\end{equation}
where
$$ \hat{f}(i_1,i_2) = f(i_2, i_1)$$
and $\theta$ is a cocycle on $\Z^2$
$$
\theta(i_1,i_2,j_1,j_2) = i_1 j_2 - i_2 j_1,
$$
with the coproduct on $A$ arising from the abelian group structure of $\Z^2$ and
the usual product on $A$ and $\C{}G$.

The above coalgebra is an example of a regular multiplier Hopf algebra, which, as 
an algebra is, in fact, a crossed product of the algebra of functions of the discrete 
group $\Z^2$ by the group $\Z_2$. 

First of all, we shall determine a group-like multiplier. If $\sigma = f +  hx$ is a group-like
element it shall satisfy, $f,h \in C(\Z^2)$,
$$ \Delta(f) + \Delta(h) \Delta(x) = (f +  hx) \otimes (f +  gx).$$
and this can be true only if $h=0$ and $\sigma$ is group-like for $C(\Z^2)$, which
means that $\sigma(i_1, i_2) = e^{\alpha_1 i_1 + \alpha_2 i_2}$.

A character of the algebra, $\delta$ must satisfy $\delta(x) = \pm 1$ and 
$\delta(f) = f(i,i)$ for some $(i,i) \in \Z^2$. The associated twisted antipode
is:
$$ 
(S_{(\delta,\sigma)} f)(j_1,j_2)= f(i-j_1,i-j_2), \qquad (S_{(\delta,\sigma)} x) = \pm x.
$$
One can easily check that only the character $\delta(f) = f(0,0)$ is possible for
a modular pair in involution, and $\sigma, \delta$ is a modular pair in involution iff $\sigma$ satisfies $\sigma(i,j)=\sigma(j,i)$. 
\end{exm}
%%%%%%%%%%%%%%%%%%%%%%%%%%%%%%%%%%%%%%%%%%%%%%%%%%%%%%%%%%%%%%%%%%%%%%%%%%
\section{Cosimplicial modules for multiplier Hopf algebras.}

Let us recall here the core definition of a cosimplicial module for a Hopf algebra $H$.
Note that this uses, of course, only the coproduct (for the pre-cosimplicial structure,
coface maps) and the counit (to define codegeneracy maps). Although our motivation for the choice follows from the module over cyclic category for Hopf algebras, 
as proposed by Connes-Moscovici \cite{CoMo}, yet we use only the cosimplicial
part in this section.

\begin{lem}[\cite{CoMo}]
Let $H$ be a Hopf algebra with a modular pair in involution. Then, with
$E^n = H^{\otimes n}$, and the following linear maps, 
\begin{equation}
\begin{aligned}
&\delta_i: H^{\otimes n} \to H^{\otimes n+1}, \qquad
&\sigma_i: H^{\otimes n} \to H^{\otimes n-1}, \qquad
\end{aligned}
\end{equation}
defined as
\begin{equation}
\begin{aligned}
&\delta_i(h^1 \otimes \ldots \otimes h^n) := h^1 \otimes \ldots \otimes h^{i-1} \otimes \Delta(h^i) \otimes h^{i+1} \otimes \ldots \otimes h^n,\\
&\delta_0(h^1 \otimes \ldots \otimes h^n) := 1 \otimes h^1 \otimes \ldots \otimes \ldots \otimes h^n,\\
&\delta_{n+1}(h^1 \otimes \ldots \otimes h^n) :=  h^1 \otimes \ldots \otimes \ldots \otimes h^n \otimes \sigma,\\
&\sigma_i(h^1 \otimes \ldots \otimes h^n) := 
\epsilon(h^i) h^1 \otimes \ldots \otimes h^{i-1} \otimes h^{i+1} \otimes \ldots \otimes h^n,\\
&\tau_n(h^1 \otimes \ldots \otimes h^n) := \left(\Delta^{n-1} S_{(\delta,\sigma)}(h^1) \right)
\left( h^2 \otimes \ldots \otimes h^{n} \otimes \sigma \right),
\end{aligned}
\label{cosimp}
\end{equation}
$\{ E^n ,\delta_i, \sigma_j, 0\leq i \leq n\!+\!1, 0<j<n\!+\!1,\}_{n\geq0}$ is a cosimplicial 
module. 
\end{lem}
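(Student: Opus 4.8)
The plan is to check, one family at a time, the defining relations of a cosimplicial module, namely
\[
\delta_j \delta_i = \delta_i \delta_{j-1}\ (i<j),\qquad
\sigma_j \sigma_i = \sigma_i \sigma_{j+1}\ (i\le j),
\]
and
\[
\sigma_j \delta_i =
\begin{cases}
\delta_i \sigma_{j-1}, & i<j,\\
\id, & i=j\ \text{or}\ i=j+1,\\
\delta_{i-1}\sigma_j, & i>j+1.
\end{cases}
\]
Since the lemma only asserts the \emph{cosimplicial} structure, the cyclicity operator $\tau_n$ plays no role here and is set aside; in particular the involution condition \eqref{twiant2} and the normalization $\delta(\sigma)=1$ are not needed, only that $\Delta$ is coassociative, $\epsilon$ is a counit, and $\sigma$ is group-like. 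Because $H$ is here an ordinary Hopf algebra, every $\delta_i$ and $\sigma_j$ lands honestly in a finite tensor power and no multiplier subtleties intervene.

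For the relations among interior indices the computation is the standard one. When $1\le i<j\le n$ the two coproducts act on distinct, well-separated tensor slots and commute after the obvious index shift; the single place where a genuine identity is used is the adjacent case, where the equality $\delta_{i+1}\delta_i=\delta_i\delta_i$ is exactly coassociativity $(\Delta\otimes\id)\Delta=(\id\otimes\Delta)\Delta$ applied to the $i$-th factor. The codegeneracy–codegeneracy relations reduce to the fact that applying $\epsilon$ to two different factors is independent of the order, again after reindexing. For the mixed relations, the cases $i<j$ and $i>j+1$ are once more a matter of disjoint supports, while the two crucial cases $i=j$ and $i=j+1$ are precisely the counit axioms $(\epsilon\otimes\id)\Delta=\id=(\id\otimes\epsilon)\Delta$: comultiplying the $i$-th factor and then contracting one of the two resulting legs with $\epsilon$ returns the original tensor, giving $\id$.

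The point that deserves real care is the behaviour at the two boundary cofaces, $\delta_0$ (which inserts the unit $1$) and $\delta_{n+1}$ (which appends $\sigma$), since these break the interior symmetry and must be treated separately in every identity in which they appear. Here the group-like hypotheses enter: the relations $\delta_1\delta_0=\delta_0\delta_0$ and $\delta_{n+2}\delta_{n+1}=\delta_{n+1}\delta_{n+1}$ hold exactly because $\Delta(1)=1\otimes 1$ and $\Delta(\sigma)=\sigma\otimes\sigma$, so that comultiplying a freshly inserted $1$ (resp.\ $\sigma$) reproduces the doubled insertion. Likewise, collapsing a boundary insertion by a codegeneracy requires $\epsilon(1)=1$ and $\epsilon(\sigma)=1$, the latter being automatic from group-likeness since applying $\epsilon\otimes\id$ to $\Delta(\sigma)=\sigma\otimes\sigma$ forces $\epsilon(\sigma)=1$. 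I expect this boundary bookkeeping — verifying that the $\sigma$-tail and the unit-head close up correctly under all the mixed identities — to be the only genuinely delicate part; the interior identities are immediate from coassociativity and the counit axioms.
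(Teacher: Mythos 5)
Your verification is correct, and it isolates exactly the right ingredients: coassociativity for the adjacent coface relation, the two counit axioms for the identity cases of the mixed relations, and the group-like identities $\Delta(1)=1\otimes 1$, $\Delta(\sigma)=\sigma\otimes\sigma$, $\epsilon(\sigma)=1$ (the last indeed automatic, since applying $\epsilon\otimes\id$ to $\Delta(\sigma)=\sigma\otimes\sigma$ gives $\sigma=\epsilon(\sigma)\sigma$) for the boundary cofaces $\delta_0$ and $\delta_{n+1}$. Be aware, however, that the paper itself gives no proof of this lemma: it is quoted from Connes--Moscovici \cite{CoMo} as a known result, so there is no internal argument to compare against. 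The closest internal analogue is Proposition \ref{cosimp-pro}, whose proof runs precisely your skeleton --- coassociativity for the $\delta_i\delta_j$ relations, counit compatibility for the relations involving the codegeneracies --- but must first make sense of the maps at all, via the extension property and the decomposition $a_0\otimes a_1=\sum_j\Delta(r_j)(s_j\otimes 1)$, since $\Delta$ is not valued in $H\otimes H$; your remark that no multiplier subtleties arise for an ordinary Hopf algebra is exactly the contrast the paper is built around. One cosmetic caveat: the paper indexes codegeneracies by the tensor factor that $\epsilon$ hits (so $0<j<n+1$), which shifts the standard identities you wrote down by one (e.g.\ $\sigma_j\delta_i=\id$ holds for $j=i$ or $j=i+1$ in the paper's convention); this is a reindexing, not an error, but worth stating explicitly if the computation is written out in full.
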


Of course, for the multiplier Hopf algebras this does not work directly as the maps
lead out of the space $H^{\otimes n}$. However, using the standard tools we might 
be able to extend them and propose two possible versions for the multiplier Hopf
algebras.

Let us denote by $M^n(H)$ the multiplier algebra of $M(H^{\otimes n})$ and 
by $N(H)$ the algebra spanned by $H,1,\sigma$, which can be understood
as a subalgebra of $M(H)$.

\begin{defn}\label{m0def}
Let us define  by $M_0^n(H)$ a subspace of $M^n(H)$, which consists 
of all elements $z$ in the multiplier algebra of $H^{\otimes n}$, 
such that for every $w_i = a_1 \otimes  a_2 \otimes \cdots 1_i \cdots \otimes a_{n}$,
$i=1,2,\ldots n$ we have,
$$ z w_i \in H \otimes H \otimes \cdots N_i(H) \cdots \otimes H \ni w_i z, $$
where $N_i(H)$ is on the $i$-th place.
\end{defn}

\begin{rem}
Observe that for regular multiplier Hopf algebras $\Delta(H) \subset M_0^2(H)$
and that $M_0^n(H)$, in fact, is an algebra.
\end{rem}
We have:
\begin{prop}
Each of the maps $\delta_i$, extends as a map between 
$M_0^n(H)$ and $M_0^{n+1}(H)$, similarly, maps $\sigma^i$ extend to maps 
between $M_0^{n+1}(H)$ and $M_0^n(H)$.
\end{prop}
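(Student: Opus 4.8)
The plan is to handle the three kinds of coface maps and then the codegeneracies separately, in each case first exhibiting a candidate multiplier on the enlarged (respectively reduced) tensor power and then checking that it satisfies the defining condition of Definition \ref{m0def}.

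For the two boundary cofaces the argument is direct. Given $z \in M_0^n(H)$ I would set $\delta_0(z) = 1 \otimes z$ and $\delta_{n+1}(z) = z \otimes \sigma$; both are genuine elements of $M(H^{\otimes n+1})$ since $1, \sigma \in M(H)$ and the algebraic tensor product of multipliers is a multiplier of the tensor product. To see that they lie in $M_0^{n+1}(H)$ one tests against each $w_j$: when the local unit sits in the new slot that slot contributes $1$ or $\sigma \cdot 1 = \sigma$, both in $N(H)$, while otherwise it contributes $\sigma a = \sigma \cdot a \in H$ (as $\sigma$ is a multiplier of $H$); the remaining slots simply reproduce the condition already satisfied by $z$. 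The only facts used are that $N(H)$ contains $1$ and $\sigma$ and that $\sigma$ maps $H$ into $H$.

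The substantive case is that of the interior cofaces $\delta_i$, $1 \le i \le n$, which apply $\Delta$ to the $i$-th leg. Here I would invoke Lemma \ref{ext2}: the operators $\Delta \otimes \id$ and $\id \otimes \Delta$ extend to maps $M(H\otimes H) \to M(H^{\otimes 3})$, and tensoring with identities in the untouched slots gives a candidate for $\delta_i$. The obstacle is exactly that one cannot apply such a slot-wise coproduct to an arbitrary element of $M(H^{\otimes n})$, because the multiplier algebra of a tensor product is strictly larger than the tensor product of the multiplier algebras; removing this obstruction is the whole purpose of Definition \ref{m0def}. Indeed, for $z \in M_0^n(H)$ testing with a local unit in the $i$-th slot exhibits an element whose $i$-th component lies in $N(H) = \operatorname{span}\{H, 1, \sigma\}$, and $\Delta$ is defined on all of $N(H)$: it is the coproduct on $H$ and sends $1 \mapsto 1\otimes 1$ and $\sigma \mapsto \sigma \otimes \sigma$ by group-likeness. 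This yields an unambiguous prescription for the action of $\delta_i(z)$ on test tensors. I would then verify, by the local-unit technique of \cite{DvDZ99} used exactly as in the self-consistency argument for the twisted antipode, that the resulting multiplier does not depend on the chosen local unit, and finally that it again satisfies Definition \ref{m0def}: the $i$-th slot splits into two slots governed by $\Delta$, and the preceding Remark ($\Delta(H) \subset M_0^2(H)$) together with $\Delta(1) = 1\otimes 1$ and $\Delta(\sigma) = \sigma\otimes\sigma$, i.e.\ $\Delta(N(H)) \subset M_0^2(H)$, keeps the relevant components inside $N(H)$.

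The codegeneracies $\sigma_i$ apply the counit to the $i$-th leg. Since $\epsilon$ is a homomorphism it extends to $M(H)$ by the extension property of Lemma \ref{exten}, and on $N(H)$ it takes the values $\epsilon|_H$, $\epsilon(1) = 1$ and $\epsilon(\sigma) = 1$ (a group-like element is counital). For $z \in M_0^{n+1}(H)$, testing with a local unit in the $i$-th slot again displays an element of $N(H)$ there, to which $\epsilon$ may be applied, producing a scalar multiple of a tensor over the remaining legs; the condition for $M_0^n(H)$ is then immediate from that for $z$. I expect the genuine work to be confined to the interior cofaces, and specifically to the independence of the local unit and to the closure $\delta_i(M_0^n(H)) \subset M_0^{n+1}(H)$; the rest is bookkeeping.
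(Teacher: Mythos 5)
Your definitional idea for the interior cofaces is workable: for $z \in M_0^n(H)$ one can set
\[
\delta_i(z)\,(a_1 \otimes \cdots \otimes c \otimes d \otimes \cdots \otimes a_n) := \Delta_i\bigl(z\,(a_1 \otimes \cdots \otimes 1_i \otimes \cdots \otimes a_n)\bigr)\,(1 \otimes \cdots \otimes c \otimes d \otimes \cdots \otimes 1),
\]
where $\Delta_i$ means applying $\Delta$ to the $i$-th leg, which is meaningful precisely because Definition \ref{m0def} puts that leg in $N(H)$; one checks this is a left multiplier, and it in fact coincides with the multiplier the paper constructs. The genuine gap is in the closure claim $\delta_i(M_0^n(H)) \subset M_0^{n+1}(H)$. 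You verify Definition \ref{m0def} only when the distinguished unit sits in one of the two slots created by $\Delta$ (via $\Delta(N(H)) \subset M_0^2(H)$); you never treat the main case, where the unit sits in an untouched slot $j \notin \{i, i+1\}$. In your framework that case forces you to evaluate $z$ against a test element carrying \emph{two} units --- one in slot $i$, demanded by your formula, and one in slot $j$ --- and Definition \ref{m0def} says nothing about such double tests: it controls $z\,w_i$ only when all remaining entries lie in $H$. This is exactly the difficulty the paper's proof is built to avoid: following \cite{AvD08}, Proposition 1.10, it writes $a_0 \otimes a_1 = \sum_j \Delta(r_j)(s_j \otimes 1)$ with $r_j, s_j \in H$ (possible by bijectivity of $W_R$) and defines $\delta_1(y)$ through $y(r_j \otimes a_2 \otimes \cdots \otimes a_n)$, so that the element handed to $y$ contains exactly one unit wherever the test unit is placed; closure then follows from a three-case analysis. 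The analogous device for the codegeneracies (choosing $a_i \in H$ with $\epsilon(a_i) \neq 0$ instead of testing with the unit) is what makes the paper's $\sigma_i$ argument close up, and your treatment of $\sigma_i$ has the identical gap.

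Two further points. Your appeal to ``the local-unit technique'' and to independence ``of the chosen local unit'' is confused: the $1_i$ of Definition \ref{m0def} is the unit of $M(H)$, not a local unit in $H$, so in your prescription there is no choice to be independent of; conversely, if you literally test with a local unit $e \in H$, then $z(\cdots \otimes e \otimes \cdots)$ lies in $H^{\otimes n}$ with no distinguished $N(H)$-component and your prescription loses its meaning. Finally, even once repaired, your construction only defines $\delta_i$ and $\sigma_i$ on $M_0^n(H)$, whereas the paper's factorization argument extends them to all of $M^n(H)$ (as recorded in the remark following its proof); that stronger statement is what is used to make $\{M^n(H)\}_{n\geq 0}$ a cosimplicial module in Proposition \ref{cosimp-pro}.
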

\begin{proof}
First, observe that the first part of the statement is trivial for the maps $\delta_0$ 
and $\delta_{n+1}$ as they map, respectively, $M_0^n$ to $N(H) \otimes M_0^n(H) \subset 
M_0^{n+1}(H)$  (and $M_0^n(H) \otimes N(H) \subset M_0^{n+1}(H)$, respectively). 
The inclusions are obvious.

To see that maps $\delta_i$ extend we repeat the arguing that is used in \cite{AvD08},
Proposition 1.10. Let $y \in M^n(H)$, which means that for any $a_1, a_2,\ldots, a_n \in H$, we have:
$$
y (a_1 \otimes a_2 \otimes \cdots \otimes a_n) \in H^{\otimes n} \ni (a_1 \otimes a_2 \otimes \cdots \otimes a_n)y. 
$$

For a fixed $y$ let us define an element $z \in M_L^{n+1}(H)$
in the following way:
\begin{equation}
\begin{aligned}
z (a_0 \otimes a_1 \otimes \cdots \otimes a_n) = 
\sum_i  \delta_1   \left( y (r_i \otimes a_2  \otimes \cdots \otimes a_n) \right)
 (s_i \otimes 1 \otimes \cdots \otimes 1 ) ,
\end{aligned}
\label{defdelta}
\end{equation} 
where we use the fact that for a regular multiplier Hopf algebra there exists 
$r_i,s_i \!\in\! H$  such that 
$$  a_0 \otimes a_1 
= \sum_j  \Delta(r_j) (s_j \otimes 1). $$

Then, the same arguments as in the above mentioned proposition ensure that $z$
is a well defined element of $M_L^{n+1}(H)$, so setting $z=\delta_1(y)$ shows 
that $\delta_1$ has an extension as a map $M_L^n(H) \to M_L^{n+1}(H)$. 
Next we need to demonstrate that it maps $M_0^n(H)$ to $M_0^{n+1}(H)$. 
To prove it we need to consider three cases. First, if one of the elements 
$a_2,\ldots, a_n$ is equal to $1$ we use the assumption that $y \in M_0^n(H)$ 
and as a consequence the argument of $\delta_1$ on the right-hand side of (\ref{defdelta})
is in tensor product $H \otimes H \otimes \cdots N(H) \cdots \otimes H$, where a single
$N(H)$ is in the same place as $1$. Since $\delta_1$ acts as $\Delta$ on the 
first element of the tensor product we see that the right-hand side is again in 
the same target space.  If $a_0=1$ then we check that 
$$ z (1 \otimes a_1 \otimes \cdots \otimes a_n) = 
\sum_i  \delta_1   \left( y (1\otimes a_2  \otimes \cdots \otimes a_n) \right)
(1 \otimes a_1 \otimes \cdots \otimes 1 ),$$
is in $N(H) \otimes H \cdots \otimes H$. Indeed by definition, the first element
of the tensor product in the argument of $\delta_1$ on the right-hand side is
in $N(H)$, and then we know that for any $x \!\in\!N(H)$ we have
$$ \Delta(x) (1 \otimes a) \in N(H) \otimes H, \;\;\; \Delta(x) (a \otimes 1) \in H \otimes N(H),$$
which is sufficient to show the desired result. Similarly, if $a_1=1$, we take 
(\ref{defdelta}) with $r\!=\!1$,$s\!=\!a_0$ and use the same argument as above.
%%%%%%%%%%%%%%%%%%%% CHECK THIS !!!!!
To extend the maps $\sigma_i$ we use an analogous construction. Let us take
$y \!\in\! M_0^n(H)$. We define $z\!=\!\sigma_i(y) \!\in\! M_L^{n-1}(H)$ in the following 
way, for all  $a_1, a_2,\ldots, a_n \in H$, we put:
$$ 
\begin{aligned}
\epsilon(a_i) z (a_1 \otimes a_{i-1} \otimes a_{i+1} \cdots \otimes a_n) = 
\sigma_i \left( y (a_1 \otimes \cdots \otimes a_n)\right).
\end{aligned}
$$ 

The above proof demonstrates that $\delta_i(y)$ and $\sigma_j(y)$ are well-defined
left multipliers, however, repeating analogous arguments we can show that they
are also right-multipliers obeying also the second identity from definition \ref{m0def}
and hence they are indeed in the respective $M_0^*(H)$ modules.
\end{proof}

\begin{rem}
The above construction uses only the extension of the coproduct to the multiplier 
algebra and the coassociativity as well as compatibility of the compatibility of the 
counit with the coproduct for the multiplier Hopf algebras. Note that if the arguments
of $\delta_i, \sigma_j$ are in $M^n(H)$, the definition still holds (though of course
the value is only in the respective multiplier and not in its restricted version).
\end{rem}
Summarizing we have,
\begin{prop}\label{cosimp-pro}
With the maps $\delta_i$,$\sigma_j$ defined as before, 
$\{M_0^n(H)\}_{n\geq0}^\infty$ as well as $\{M^n(H)\}_{n\geq0}$ are cosimplicial 
modules.
\end{prop}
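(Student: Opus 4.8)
The plan is to verify that the extended faces $\delta_i$ and codegeneracies $\sigma_j$ satisfy the usual (co)simplicial relations
\begin{gather*}
\delta_j\delta_i=\delta_i\delta_{j-1}\ (i<j),\qquad
\sigma_j\sigma_i=\sigma_i\sigma_{j+1}\ (i\le j),\\
\sigma_j\delta_i=\delta_i\sigma_{j-1}\ (i<j),\qquad
\sigma_j\delta_i=\delta_{i-1}\sigma_j\ (i>j+1),\qquad
\sigma_j\delta_j=\id=\sigma_j\delta_{j+1}.
\end{gather*}
By the preceding proposition each of these maps is already known to be a well-defined map between the appropriate $M_0^*(H)$ (resp.\ $M^*(H)$) spaces, so only the relations themselves remain.

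The key observation is that an element of $M^m(H)$ is determined by its left and right action on $H^{\otimes m}$, by non-degeneracy of the multiplier module. Hence, to prove an identity $F=G$ between two composites of extended maps, it suffices to check $F(z)\,w=G(z)\,w$ and $w\,F(z)=w\,G(z)$ for every $z$ in the source and every $w=a_1\otimes\cdots\otimes a_m\in H^{\otimes m}$. Unfolding the defining formula (\ref{defdelta}) for the inner faces and the analogous formula for the degeneracies, each such product rewrites as an expression in which the coproduct $\Delta$, the counit $\epsilon$ and the multiplication of $H$ are applied to honest elements of $H$. The resulting equality is precisely the corresponding relation of the Connes--Moscovici Lemma above, whose verification is purely formal and uses only coassociativity of $\Delta$, the counit axioms $(\epsilon\otimes\id)\Delta=\id=(\id\otimes\epsilon)\Delta$, and the group-like property of $\sigma$. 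Coassociativity survives for the coproduct extended to the multiplier algebra by Lemma \ref{ext2}, while the counit relations and $\Delta(\sigma)=\sigma\otimes\sigma$ hold by hypothesis; so each reduced identity holds in $H^{\otimes m}$ and, by non-degeneracy, lifts to the multiplier identity $F=G$.

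The delicate points, and the place where I expect to spend the most care, concern the boundary faces $\delta_0$ and $\delta_{n+1}$, which insert the multipliers $1$ and $\sigma\in N(H)\subset M(H)$ rather than elements of $H$. First, the relations that apply an inner face to the $\sigma$-slot reduce to $\Delta(\sigma)=\sigma\otimes\sigma$, and those that apply a codegeneracy to the slot occupied by $1$ or $\sigma$ reduce to $\epsilon(1)=1$ and $\epsilon(\sigma)=1$; the latter is automatic, since group-likeness together with $(\epsilon\otimes\id)\Delta=\id$ and the invertibility $\sigma^{-1}=S(\sigma)$ forces $\epsilon(\sigma)=1$. Second, one must keep every intermediate expression inside $M_0^*(H)$ while performing these reductions: stability under the inner faces is exactly the fact, already used in the proof of the previous proposition, that $\Delta$ sends an $N(H)$-slot to an $N(H)$-slot (e.g.\ $\Delta(x)(1\otimes a)\in N(H)\otimes H$ for $x\in N(H)$), and the degeneracies preserve $M_0^*(H)$ because the counit collapses an $N_i(H)$-slot to a scalar. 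The argument for the family $\{M^n(H)\}$ is identical but lighter, as there is no $M_0$-membership to track, only well-definedness as multipliers, which is already in hand. The substance of the proof is therefore this bookkeeping rather than any new analytic input: the (co)simplicial relations themselves are the same formal identities as in the Hopf-algebra case.
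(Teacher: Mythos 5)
Your proposal is correct and follows essentially the same route as the paper: reduce each cosimplicial identity between extended maps to an evaluation on elementary tensors of $H^{\otimes m}$ via the defining formula (\ref{defdelta}) (using non-degeneracy to conclude equality of multipliers), and then verify the resulting element-level identities from coassociativity, the counit axioms, and $\Delta(\sigma)=\sigma\otimes\sigma$. The paper simply makes this concrete by computing the representative case $\delta_1\delta_2=\delta_3\delta_1$ explicitly and invoking the slot-commutation identity $\bigl((\id\otimes\Delta)(a_1\otimes a_2)\bigr)(h\otimes 1\otimes 1)=(\id\otimes\Delta)(a_1h\otimes a_2)$, while you describe the same reduction schematically and with somewhat more care about the boundary faces and $M_0$-membership.
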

\begin{proof}
It remains to prove the rules for the composition of maps. First let us check the composition of maps $\delta_i$ satisfies 
	$\delta_i \delta_j = \delta_{j+1} \delta_i$, $i \leq j$. We skip the
	trivial case when $i=0$ or $j=n+1$ (then it is straightforward) and 
	concentrate on the nontrivial case $0 < i \leq j < n+1$. First, we consider $i<j$,
	for simplicity fixing $i=1, j=2$ (all other cases will be analogous). 
	Let us write the element in the left multiplier, $z$ defined through in 
	(\ref{defdelta}), for the product of $\delta_1,\delta_2$ acting on $y \in M_0^n(H)$,
	\begin{equation}
	\begin{aligned}
	& \quad  \delta_1(\delta_2(y)) (a_0 \otimes a_1 \otimes \cdots \otimes a_n \otimes a_{n+1}) = \\
	&= \sum_{i}  \delta_1 \left( \delta_2(y) (r_i \otimes a_2 \otimes a_3  \otimes \cdots \otimes a_{n+1}) \right)
	( s_i \otimes 1 \otimes \cdots \otimes 1 ) \\
	&= \sum_{i,j}  \delta_1 \left( \delta_2 \left( y ( r_i \otimes p_j  \otimes \cdots \otimes a_n) \right)
	(1  \otimes  1 \otimes q_j  \otimes \cdots \otimes 1 ) \right) (s_i \otimes 1 \otimes \cdots \otimes 1 ).
	\end{aligned}
	\label{coadelta1}
	\end{equation} 
	where we use 
	$$  a_0 \otimes a_1 
	= \sum_j  \Delta(r_j) (s_j \otimes 1), \qquad
	a_2 \otimes a_3 
	= \sum_j  \Delta(p_j)(q_j \otimes 1). 
	$$
	On the other hand 
	\begin{equation}
	\begin{aligned}
	& \quad \delta_3(\delta_1(y)) (a_0 \otimes a_1 \otimes \cdots \otimes a_n \otimes a_{n+1}) = \\
	&= \sum_{j}  \delta_3 \left( \delta_1(y) (a_0 \otimes a_1 \otimes p_j  \otimes \cdots \otimes a_{n+1}) \right)
	(1 \otimes 1 \otimes q_j \otimes \cdots \otimes 1 ) \\
	&= \sum_{i,j}  \delta_3 \left( \delta_1 \left( y ( r_i \otimes p_j  \otimes \cdots \otimes a_n) \right)
	(s_i \otimes 1 \otimes \cdots \otimes 1 ) \right) (1 \otimes  1 \otimes q_j \otimes  \cdots \otimes 1 ).
	\end{aligned}
	\label{coadelta1}
	\end{equation} 
	To see that both expressions are identical it is sufficient to use the fact that for the multiplier
	Hopf algebras we have:
	$$ \left( (\id \otimes \Delta) (a_1 \otimes a_2) \right) (h \otimes 1 \otimes 1)
	 = (\id \otimes \Delta) (a_1 h \otimes a_2), $$
	where the equality makes sense in the respective multiplier.Similar arguments, which are directly based on the coassociativity of the coaction for the multiplier Hopf algebras can be applied in the case $i=j$. 
		
	Finally observe that the relations between the coface maps and codegeneracy operators again follow directly from the properties of the counit extended to the respective multiplier.
\end{proof}	
%%%%%%%%%%%%%%%%%%%%%%%%%%%%%%%%%%%%%%%%%%%%%%%%%%%%%%%%%%%%%%
\begin{defn}
	We define a full Hochschild Hopf-cohomology of a multiplier Hopf algebra with
	respect to modular element $\sigma$ as:
	$$ HH_{\sigma}^n(H) := {\hbox{ker}\, b}_{M^{n}(H)} / { \hbox{Im} \,b }_{M^{n-1}(H)}, $$
	where $b = \delta_0 - \delta_1 + \cdots + (-1)^{n+1} \delta_{n+1}$.
	
	Since we know that coface maps and in consequence, the coboundary $b$ restricts
	to the restricted multiplier, we can equally define the minimal Hochschild Hopf-cohomology of a multiplier Hopf algebra,with respect to modular 
	element $\sigma$, as:
	
	$$ HH_{\sigma,0}^n(H) := {\hbox{ker}\, b}_{M_0^n(H)^{n}} / { \hbox{Im} \,b }_{M_0^{n-1}(H)}.$$
\end{defn}
%%%%%%%%%%%%%%%%%%%%%%%%%%%%%%%%%%%%%%%%%%%%%%%%%%%%%%%%%%%%%%
Observe that out of the modular pair it is only $\sigma$ that enters the definition of the coboundary. Although the cochains start with $n\!=\!0$, with $M^0(H)=\C$,  the first nontrivial cohomology group is $HH^1_\sigma$. Indeed, the coboundary $b$ acting
on $c\!\in\!\C$ gives $bc = c(1-\sigma) \in M_0(H) \!\subset\! M(H)$, and its kernel is 
trivial (unless $\sigma\!=\!1$). Before we proceed with the further restrictions of the module, let us look at the motivating example.
\subsection{The discrete group $G$}

Let $H\!=\!C_0(G)$ be an algebra of functions with finite support over a discrete group with the standard basis $e_g$ and let us fix a multiplicative morphism $\sigma: G \to \mathbb{C}$. 
The multiplier of $M^n(C_0(G))$ is a space of all functions over $G^{\times n}$ whereas 
$M_0^n(C_0(G))$ is the space of functions such that when evaluated on $n\!-\!1$ 
points give a linear combination of a function with finite support, identity and 
$\sigma$ in the remaining variable.

\begin{lem}
Taking a function $F \in M^n(C_0(G))$ we have:
\begin{equation}
\begin{aligned}
b F (g_1, g_2, \ldots, g_{n+1}) =& F(g_2, \ldots, g_{n+1}) + (-1)^i \sum_i F(g_1, \ldots g_i g_{i+1}, \ldots, g_{n+1}) \\ 
&+ (-1)^{n+1} F (g_1, g_2, \ldots, g_{n}) \sigma(g_{n+1}),
\label{bdisg}
\end{aligned}
\end{equation}	
and we immediately see that it also maps elements of $M^n(C_0(G))$ to $M^{n+1}(C_0(G))$.
\end{lem}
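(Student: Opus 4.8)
The statement is essentially a direct computation: I want to evaluate the abstract coboundary $b = \delta_0 - \delta_1 + \cdots + (-1)^{n+1}\delta_{n+1}$ on a function $F \in M^n(C_0(G))$ and recognize the result as the group-cohomology-type formula \eqref{bdisg}. The plan is to translate each coface map $\delta_i$ into its concrete action on functions over $G^{\times k}$, using the explicit $W_R$, $W_L$ maps computed earlier for $C_0(G)$, and then sum the pieces with signs.

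First I would establish the dictionary between the abstract structure maps and operations on functions. The key input is that the coproduct on $C_0(G)$ is dual to the group multiplication: under the identification of $M^n(C_0(G))$ with functions on $G^{\times n}$, the map $\delta_i$ (acting as $\Delta$ on the $i$-th slot for $1 \le i \le n$) splits the $i$-th argument via $g_i \mapsto (g_i, g_{i+1})$ with the two merged into the product $g_i g_{i+1}$ when read back, so that $(\delta_i F)(g_1,\ldots,g_{n+1}) = F(g_1,\ldots,g_i g_{i+1},\ldots,g_{n+1})$. The two boundary faces are even simpler from the definitions in \eqref{cosimp}: $\delta_0$ prepends the unit multiplier, giving $(\delta_0 F)(g_1,\ldots,g_{n+1}) = F(g_2,\ldots,g_{n+1})$ since the identity function is constant $1$, and $\delta_{n+1}$ appends the group-like multiplier $\sigma$, contributing the factor $\sigma(g_{n+1})$ and yielding $F(g_1,\ldots,g_n)\sigma(g_{n+1})$.

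With this dictionary in hand, I would simply assemble the alternating sum. The $\delta_0$ term gives $F(g_2,\ldots,g_{n+1})$, each interior $\delta_i$ contributes $(-1)^i F(g_1,\ldots,g_i g_{i+1},\ldots,g_{n+1})$, and the final term $(-1)^{n+1}\delta_{n+1}$ produces $(-1)^{n+1} F(g_1,\ldots,g_n)\sigma(g_{n+1})$, which is exactly \eqref{bdisg}. The closing assertion, that $b$ sends $M^n(C_0(G))$ into $M^{n+1}(C_0(G))$, is then immediate: each summand is manifestly a function on $G^{\times(n+1)}$ built from $F$ by substitution of arguments and multiplication by the scalar-valued multiplier $\sigma$, and membership in the full multiplier algebra $M^{n+1}$ imposes no finite-support constraint, so no convergence or support issue arises.

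The only genuinely delicate point — and the step I expect to need the most care — is the identification of the interior $\delta_i$ with the group-product substitution, since abstractly $\delta_i$ lands in a multiplier algebra and the equality $(\delta_i F)(\ldots) = F(\ldots g_i g_{i+1}\ldots)$ must be read in that multiplier sense rather than as a pointwise identity of finite-support functions. I would justify it by pairing against elements $e_{g_i} \otimes e_{g_{i+1}}$ and using the explicit formulas $W_R(e_p,e_h) = e_{ph^{-1}}\otimes e_h$ and $W_L(e_p,e_h) = e_p \otimes e_{p^{-1}h}$ to extract the dual of comultiplication, exactly as the extension of $\delta_1$ was handled in the proof of \Cref{cosimp-pro}. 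Once this single translation is verified, the remaining assembly is purely bookkeeping.
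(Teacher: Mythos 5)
Your proposal is correct and takes essentially the same route as the paper's proof: identify $M^n(C_0(G))$ with functions on $G^{\times n}$ acting by pointwise multiplication, compute the interior cofaces by pairing against basis elements via the extension formula \eqref{defdelta} and the factorization $e_{g_1}\otimes e_{g_2}=\Delta(e_{g_1 g_2})(e_{g_1}\otimes 1)$ (precisely the ``delicate point'' you flag), read off $\delta_0$ and $\delta_{n+1}$ as prepending $1$ and appending $\sigma$, and assemble the alternating sum. The paper likewise carries out the explicit computation only for $\delta_1$ and declares the other cofaces analogous, and your remark that the full multiplier algebra $M^{n+1}(C_0(G))$ imposes no finite-support constraint is exactly why the closing claim is immediate there as well.
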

{
\begin{proof}
Any function $F$ of $n$ variables over a discrete group $G$ can be understood
as an element of the multiplier $M((C_0(G))^{\otimes n})$ and thus, the evaluation of the multiplier on elements of $(C_0(G))^{\otimes n}$ corresponds to pointwise multiplication, i.e.
$$ 
\left[ F \bigl( e_{g_1} \otimes e_{g_2} \otimes \cdots \otimes e_{g_n} \bigr)\right](g_1 \otimes \ldots \otimes g_n) = 
F({g_1, g_2, \ldots, g_n} ),
$$
where each $e_h$ are the basis functions over $G$. We can then compute the explicit actions of $\delta_i$ following the definitions from Proposition 4.4 and (\ref{defdelta}). 
For example, 
\begin{equation*}
(\delta_1 F)\bigl( e_{g_1} \otimes e_{g_2} \otimes \cdots \otimes e_{g_n} \otimes e_{g_{n+1}} \bigr) = \delta_1 \bigl( F\bigl( e_{g_1 g_2} \otimes e_{g_2} \otimes \cdots \otimes e_{g_{n+1}} \bigr) \bigr)
\bigl( e_{g_1} \otimes 1 \cdots \otimes 1 \bigr),
\end{equation*}
where we have used
$$ e_{g_1} \otimes e_{g_2} = \Delta(e_{g_1 g_2})(e_{g_1} \otimes 1). $$
Evaluating this expression on $g_1 \otimes \ldots \otimes g_{n+1}$, we get
\begin{equation*}
\begin{split}
(\delta_1 F)(g_1 \otimes \ldots \otimes g_{n+1}) &=  \delta_1 \bigl( F\bigl( e_{g_1 g_2} \otimes e_{g_2} \otimes \cdots \otimes e_{g_{n+1}} \bigr) \bigr)(g_1 \otimes \ldots \otimes g_{n+1})\\
&= \bigl( F\bigl( e_{g_1 g_2} \otimes e_{g_2} \otimes \cdots \otimes e_{g_{n+1}} \bigr) \bigr) (g_1 g_2 \otimes g_3 \otimes \ldots \otimes g_{n+1})\\
&= F(g_1 g_2 \otimes g_3 \otimes \ldots \otimes g_{n+1}).
\end{split}
\end{equation*}
Computing in a similar way the action of other $\delta$ maps we obtain the formula
(\ref{bdisg}).
\end{proof}
}
As we can easily see we have,
\begin{prop}
The Hochschild Hopf-cohomology of $C_0(G)$ is equal to the cohomology of
group $G$ with values in $\C$, with the module structure of $\C$ set by $\sigma^{-1}$.  
\end{prop}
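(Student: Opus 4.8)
The plan is to exhibit an explicit isomorphism of cochain complexes between the complex computing $HH^n_\sigma(C_0(G))$ and the standard inhomogeneous (bar) complex computing the group cohomology $H^n(G,\C_{\sigma^{-1}})$, where $\C_{\sigma^{-1}}$ denotes $\C$ equipped with the left $G$-action $g\cdot z = \sigma(g)^{-1} z$. By the preceding lemma the degree-$n$ cochains on both sides are exactly all functions $G^{\times n}\to\C$, so the two complexes share the same underlying cochain spaces and only the differentials differ. The bar differential for the module $\C_{\sigma^{-1}}$ reads
\begin{equation*}
(d f)(g_1, \ldots, g_{n+1}) = \sigma(g_1)^{-1} f(g_2, \ldots, g_{n+1}) + \sum_{i=1}^n (-1)^i f(g_1, \ldots, g_i g_{i+1}, \ldots, g_{n+1}) + (-1)^{n+1} f(g_1, \ldots, g_n),
\end{equation*}
which agrees with the coboundary $b$ of (\ref{bdisg}) in every term except that the modular twist by $\sigma$ sits in the \emph{first} slot, as $\sigma(g_1)^{-1}$, rather than in the \emph{last} slot, as $\sigma(g_{n+1})$.

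The key step is to absorb this discrepancy by a multiplicative rescaling. I would define, on the common space of functions of $n$ variables, the map
\begin{equation*}
(\Phi F)(g_1, \ldots, g_n) := \sigma(g_1 g_2 \cdots g_n)^{-1}\, F(g_1, \ldots, g_n) = \sigma(g_1)^{-1} \cdots \sigma(g_n)^{-1} F(g_1, \ldots, g_n),
\end{equation*}
using that $\sigma$ is multiplicative. Since $\sigma(g)\sigma(g^{-1}) = \sigma(e) = 1$, the rescaling factor is nowhere zero, so each $\Phi$ is a linear bijection, with inverse given by multiplication by $\sigma(g_1 \cdots g_n)$. It then remains to verify the intertwining $\Phi \circ b = d \circ \Phi$, which I would check term by term. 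The point is that the product $g_1 \cdots g_{n+1}$ is left unchanged by each middle face operation $g_i g_{i+1}$, so all middle terms acquire the same factor $\sigma(g_1 \cdots g_{n+1})^{-1}$ and pass directly to the middle terms of $d$; the first term of $b$ does not contain $g_1$, so the $\sigma(g_1)^{-1}$ from the rescaling is left over and reconstructs precisely the twisted first term of $d$; and in the last term the factor $\sigma(g_{n+1})$ coming from $b$ cancels the extra $\sigma(g_{n+1})^{-1}$ from the rescaling, yielding the untwisted last term of $d$.

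Once $\Phi$ is shown to be an isomorphism of cochain complexes, it induces isomorphisms $HH^n_\sigma(C_0(G)) \cong H^n(G,\C_{\sigma^{-1}})$ in every degree, which is the assertion. As a consistency check I would inspect the bottom of the complex: in degree $0$ the map $\Phi$ is the identity (empty product), and the boundary $bc = c(1-\sigma)$ is carried to $\sigma(g_1)^{-1}\, c\,(1 - \sigma(g_1)) = c\,(\sigma(g_1)^{-1} - 1)$, which is exactly the bar differential of the constant $0$-cochain $c$ for the module $\C_{\sigma^{-1}}$. I do not anticipate a serious obstacle: the only genuine care is in matching conventions — in particular recognizing that the twist in (\ref{bdisg}) acts on the right, so that transporting it to the standard left-module position replaces $\sigma$ by $\sigma^{-1}$ — and in confirming that the cochain spaces $M^n(C_0(G))$ are the full function spaces $G^{\times n}\to\C$ used in group cohomology, which is precisely the content of the lemma immediately preceding this statement.
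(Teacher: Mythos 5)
Your proof is correct, but it runs along a genuinely different route than the paper's. The paper compares the two complexes via the reversal--inversion map $\Xi(F)(g_1,\ldots,g_n) = F(g_n^{-1},\ldots,g_1^{-1})$, which transports the $\sigma$-twist sitting in the \emph{last} slot of $b$ in (\ref{bdisg}) to the \emph{first} slot of the bar differential, the inverses being what turns $\sigma$ into the $\sigma^{-1}$-module structure. Your map $\Phi(F)(g_1,\ldots,g_n)=\sigma(g_1\cdots g_n)^{-1}F(g_1,\ldots,g_n)$ instead absorbs the twist by a pointwise rescaling, and your term-by-term check is sound: the first face acquires exactly the missing factor $\sigma(g_1)^{-1}$, the middle faces leave the product $g_1\cdots g_{n+1}$ unchanged, and the $\sigma(g_{n+1})$ in the last face cancels against the rescaling. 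One thing your route buys is that $\Phi$ intertwines the differentials on the nose, $\Phi\circ b = d\circ\Phi$, whereas the paper's map satisfies only $\Xi\circ b = (-1)^{n+1}\, d\circ\Xi$ --- a degree-dependent sign the paper does not comment on; both suffice for an isomorphism in cohomology (a nonzero scalar in each degree changes neither kernels nor images), but your version needs no such remark. What the paper's map buys in exchange is independence of $\sigma$: $\Xi$ is the natural inversion symmetry, the same one would want available when comparing the cyclic structures later in the paper, while your $\Phi$ is tailored to the given modular element. Both arguments rest on the same two inputs, which you correctly identify: that $M^n(C_0(G))$ is the space of \emph{all} functions $G^{\times n}\to\C$, and the explicit formula (\ref{bdisg}) for $b$.
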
	
\begin{proof}
The definition of the group cohomology uses cochains complex, with $n$-cochains
defined as $G$-module valued functions and the coboundary,
$$ 
\begin{aligned}
d \phi(g_1,g_2,&\ldots,g_{n+1}) = g_1 \phi(g_2,\ldots,g_{n+1}) - \phi(g_1g_2,\ldots,g_{n+1}) + \cdots \\
& +(-1)^{i} \phi(g_1,\ldots, g_i g_{i+1}, \ldots, g_n) + \cdots + (-1)^{n+1}  \phi(g_1, g_2,\ldots,g_{n}).
\end{aligned}
$$
It is easy to see that the map $\Xi$:
$$ \Xi(F)(g_1,\ldots,g_n) = F(g_n^{-1}, \ldots,g_1^{-1}), $$
is a morphism of cochain complexes $(C(G^n),\sigma,b)$ and 
$(C(G^n), \sigma^{-1}, d)$.
\end{proof}

The restriction of the cochains to the subspace denoted $M_0^n(C_0(G))$ is interesting
from the point of view of restrictions of cohomology. The usually considered restriction 
is to the bounded functions yet the above construction yields a different version.
We shall illustrate it with an example of $G=\Z$.

\begin{exm}
Let us consider $G=\Z$ and the first cohomology group $HH_\sigma^1(C_0(\Z))$. 
The group-like element in the
multiplier is an exponential function $\sigma(n) = e^{\alpha n}$. First, the $0$-th 
cochains are identified with $\C$ itself and their image under map $b$ are functions 
of the type:
$$ f(n) = \beta(e^{\alpha n}-1). $$
The condition that a function $F\!:\!G \to \C$ is in the kernel of $b$ reads,
$$F(n+m) = F(m) + F(n) e^{\alpha m} , $$
%% F(n+1) = F(1) + F(n) e^\alpha 
and, as this is an easy recurrence relation, it can be explicitly solved to give exactly
$$ F(n) = \beta (e^{\alpha n} -1), $$
which is the image  of $b$, hence we conclude that $HH^1_\sigma(C_0(\Z))\!=\!0$. 
Observe that the above function is also in the restricted cochain complex (as
it is a linear combination of a constant function and $\sigma$) so, we also 
have $ HH^1_{\sigma,0}(C_0(\Z)) = 0$.
\end{exm}

\section{The Connes-Moscovici Hopf-cyclic cohomology 
	           for functions over discrete groups.} 

The Hopf-cyclic cohomology of Hopf algebra has been constructed by 
Connes and Moscovici \cite{CoMo} on the basis of the cosimplicial module
\ref{cosimp} using the nontrivial cyclicity operator $\tau_n$:
\begin{equation}
\tau_n(h^1 \otimes \ldots \otimes h^n) := \left(\Delta^{n-1} S_{(\delta,\sigma)}(h^1) \right)
\left( h^2 \otimes \ldots \otimes h^{n} \otimes \sigma \right).
\label{taun}
\end{equation}

First of all, observe that both the maps $S_{\delta,\sigma}$ (acting is does in (\ref{taun})) as well as the coproduct do extend to the multiplier $M^n(H)$. 
The problem, however, is with the extension of the action of the resulting 
tensor product (in the Hopf algebra case) to the multiplier. In other words, 
the problem is to generalize the multiplication map $\mu: a \otimes b \to ab$
to $M(H \otimes H) \to M(H)$. We leave the question, whether this problem can 
be circumvented to future work, and concentrate here on the easy case when
this is possible, namely on commutative regular multiplier Hopf algebras.

\begin{lem}\label{lemtau}
Let $H$ be a commutative regular multiplier Hopf algebra. Then for any elements of
the algebra, $a_0, a_1,\ldots,a_n \!\in\! H$ and any $y \in M^n(H)$ the definition,
\begin{equation} \label{tauc}
\tau_n(y) 
\bigl( \Delta^{n-1}(S_{\delta,\sigma}a_0) 
(a_1 \otimes \cdots \otimes a_n) \bigr)
= \tau_n \bigl( y (a_0 \otimes a_1 \cdots \otimes a_{n-1}) \bigr)  (1 \otimes \cdots \otimes 1 \otimes a_n ),
\end{equation}
gives a well defined map from $M^n(H)$ into itself.
\end{lem}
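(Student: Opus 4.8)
The plan is to read (\ref{tauc}) not as a formula to be evaluated but as the \emph{prescription} that defines a left multiplier $\tau_n(y)$ through its action $w\mapsto\tau_n(y)\,w$ on the elements $w=\Delta^{n-1}(S_{\delta,\sigma}a_0)(a_1\otimes\cdots\otimes a_n)$, and then to verify in turn: (i) that such $w$ span $H^{\otimes n}$, so the action is prescribed everywhere; (ii) that the prescription is independent of how $w$ is written in this form; and (iii) that the object so obtained is a genuine two-sided multiplier, giving a linear map $M^n(H)\to M^n(H)$. I would first observe that the right-hand side of (\ref{tauc}) is an honest element of $H^{\otimes n}$: since $a_0,\dots,a_{n-1}\in H$, the element $y(a_0\otimes\cdots\otimes a_{n-1})$ lies in $H^{\otimes n}$ and $\tau_n$ acts on it by the explicit formula (\ref{taun}), producing an element of $M^n(H)$ carrying $\sigma$ in its last leg; right multiplication by $(1\otimes\cdots\otimes 1\otimes a_n)$ then replaces $\sigma$ by $\sigma a_n\in H$ and returns the whole expression to $H^{\otimes n}$. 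So the formula genuinely assigns a value in $H^{\otimes n}$ to each choice of $a_0,\dots,a_n$.

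For (i) I would invoke the fact, already used in the proof of the group-like multiplier lemma, that for a regular multiplier Hopf algebra $H\otimes H$ is spanned by the elements $\Delta(c)(a\otimes b)$, and iterate it through coassociativity to conclude that $\Delta^{n-1}(c)(a_1\otimes\cdots\otimes a_n)$ spans $H^{\otimes n}$. It then suffices that $S_{\delta,\sigma}a_0$ ranges over all of $H$; this holds because $S_{\delta,\sigma}$ is bijective, either directly from (\ref{twiant2}) (as $S^2_{\delta,\sigma}=\mathrm{Ad}_\sigma$ is conjugation by the invertible multiplier $\sigma$, so $S_{\delta,\sigma}$ is injective and surjective) or from the bijectivity of $S$ together with the invertibility of twisting by the character $\delta$. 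Hence the arguments appearing on the left of (\ref{tauc}) exhaust $H^{\otimes n}$.

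The heart of the matter is (ii), and it is exactly here that commutativity is indispensable. I would first settle the case of a genuine $y=h$: on a commutative $H$ both $\Delta^{n-1}$ and the twisted antipode $S_{\delta,\sigma}$ of (\ref{twiant}) are algebra homomorphisms, so a rearrangement of tensor legs (combining $S_{\delta,\sigma}(h^1a_0)=S_{\delta,\sigma}(h^1)S_{\delta,\sigma}(a_0)$ and commuting $\Delta^{n-1}(S_{\delta,\sigma}a_0)$ past the remaining factors) yields
\[
\tau_n\bigl(h\,(a_0\otimes\cdots\otimes a_{n-1})\bigr)\,(1\otimes\cdots\otimes a_n)
=\tau_n(h)\,\bigl(\Delta^{n-1}(S_{\delta,\sigma}a_0)\,(a_1\otimes\cdots\otimes a_n)\bigr),
\]
so that the right-hand side of (\ref{tauc}) equals $\tau_n(h)\,w$ and depends on $(a_0,\dots,a_n)$ only through $w$. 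To pass to an arbitrary $y\in M^n(H)$ I would use local units (\cite{DvDZ99}): given finitely many $a_i^k$ with $\sum_k\Delta^{n-1}(S_{\delta,\sigma}a_0^k)(a_1^k\otimes\cdots\otimes a_n^k)=0$, choose a common left local unit $e$ for all of them, so that $y(a_0^k\otimes\cdots\otimes a_{n-1}^k)=Y\,(a_0^k\otimes\cdots\otimes a_{n-1}^k)$ with $Y:=y(e^{\otimes n})\in H^{\otimes n}$ genuine. Applying the genuine-case identity with $Y$ in place of $h$ gives $\sum_k\tau_n\bigl(y(a_0^k\otimes\cdots\otimes a_{n-1}^k)\bigr)(1\otimes\cdots\otimes a_n^k)=\tau_n(Y)\sum_k w_k=0$, which is precisely well-definedness. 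I expect the main obstacle to be this step: making the genuine-case rearrangement rigorous and legitimate within the multiplier framework, since the product $\tau_n(h)\,w$ implicitly realizes the extension of $\mu$ to multipliers that is problematic in general, and it is only commutativity of $H$ that makes it available.

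Finally, for (iii), the assignment $w\mapsto\tau_n(y)\,w$ is $H^{\otimes n}$-linear by construction, so $\tau_n(y)$ is a well-defined left multiplier; running the same construction with left and right multiplication interchanged, exactly as in the proof of Proposition \ref{cosimp-pro} for $\delta_i$ and $\sigma_j$, produces the matching right-multiplier action and shows the two are compatible, so $\tau_n(y)\in M^n(H)$. Linearity of $y\mapsto\tau_n(y)$ is immediate from the linearity of the right-hand side of (\ref{tauc}) in $y$, and hence $\tau_n$ is a well-defined linear map $M^n(H)\to M^n(H)$.
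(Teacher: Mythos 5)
Your proposal is essentially correct, but note that the paper does not actually contain an abstract proof of Lemma \ref{lemtau} to compare against: the authors state explicitly that they omit it, confining themselves to two remarks --- that $\tau_n(y)$ is a priori only a left multiplier, which commutativity upgrades to a two-sided one, and that regularity makes the special form of the arguments in (\ref{tauc}) harmless --- and then verify everything concretely for $H=C_0(G)$ in Proposition \ref{taudis}. Your argument supplies precisely what was skipped, and its mechanism is sound: (i) the elements $\Delta^{n-1}(S_{\delta,\sigma}a_0)(a_1\otimes\cdots\otimes a_n)$ span $H^{\otimes n}$, by regularity together with bijectivity of $S_{\delta,\sigma}$ (in the commutative case (\ref{twiant2}) even forces $S^2_{(\delta,\sigma)}=\id$, so bijectivity is immediate); (ii) well-definedness reduces, via a common local unit $e$ from \cite{DvDZ99}, to the case of the genuine element $Y=y(e^{\otimes n})\in H^{\otimes n}$, where (\ref{tauc}) is an honest computation because commutativity of $H$ (hence of $M(H^{\otimes n})$, the multiplier algebra of a commutative nondegenerate algebra being commutative) makes $S_{\delta,\sigma}$ and $\Delta^{n-1}$ multiplicative and lets the factors be rearranged --- exactly the point where the extension of the multiplication map to multipliers, problematic in general as the paper emphasizes, becomes available. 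The trade-off: the paper's route yields the explicit formula (\ref{cyclg}), which is what is actually used in the sequel, while your route proves the lemma as stated for every commutative regular multiplier Hopf algebra.

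Two steps deserve tightening, though neither is fatal. First, the spanning claim for $n>2$ is not a literal iteration of the $n=2$ case; one needs a short induction: write the last two legs as $\sum\Delta(d)(p\otimes q)$, then write the first leg against $d$ as $\sum\Delta(c)(a\otimes b)$, and apply $\id\otimes\Delta$ (legitimate by Lemma \ref{ext2}) to obtain $\sum\Delta^{(2)}(c)\bigl(a\otimes\Delta(b)(p\otimes q)\bigr)$, whose entries are again in the prescribed form. Second, in step (iii) the phrase ``$H^{\otimes n}$-linear by construction'' conceals the one identity that actually makes $\tau_n(y)$ a left multiplier, namely $\tau_n(y)(wx)=(\tau_n(y)w)x$ for $w$ in the spanning set and $x\in H^{\otimes n}$; this does not follow from the prescription alone, but it does follow from your own reduction: $wx$ is again of the prescribed form (with $a_ix_i$ in place of $a_i$, and the same local unit works), so on any finite family of elements $\tau_n(y)$ coincides with the genuine multiplier $\tau_n(Y)\in M^n(H)$, and associativity of multiplication by $\tau_n(Y)$ gives the identity. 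With these two points written out, the proof is complete.
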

Observe that the map $\tau_n$ in (\ref{tauc}) is defined on an element of the multiplier
$y$ in principle gives a left multiplier only, however, as $H$ is commutative it is equal
to the right multiplier. Moreover, the arguments of $\tau_n(y)$ are elements of the
tensor product $H^{\otimes n}$ that are of very special form, however, the regularity
of $H$ will ensure that this plays no role and the definition is valid.

As the typical case of a commutative multiplier Hopf algebra is that of $H=C(G)$,
where $G$ is a discrete group, we shall omit the abstract proof of lemma \ref{lemtau} 
and provide an explicit formula for $\tau_n$ in that case. Fixing the notations as before with the modular element $\sigma$ and a character $\delta$ (which we choose, motivated
by Example 3.2 to be the counit, $\delta=\varepsilon$), we have.
\begin{prop}\label{taudis}
Let $F: G^n \to \C$ be an element of the multiplier $M^n(C_0(G))$. Then
the cyclicity operator $\tau_n$ acts in the following way:
\begin{equation}
\tau_n(F)(g_1,g_2,\ldots,g_n) = F\bigl((g_1 g_2 \cdots g_n)^{-1}, g_1,\ldots, g_{n-1}\bigr)
\sigma(g_n).
\label{cyclg}
\end{equation}
and $\tau_n$ satisfies the same identities as the cyclicity operator for the 
cosimplicial module, that is
$$
\begin{aligned}
&\delta_{p-1} \tau_n = \tau_{n+1} \delta_p, \quad \forall p = \{1, \ldots, n\}; \qquad
& \tau_{n+1} \delta_0 = \delta_n \\
& \sigma_{p-1} \tau_n = \tau_{n+1} \sigma_p, \quad \forall p = \{1, \ldots, n\};\qquad
& \tau_n \sigma_0 = \sigma_n \tau_{n+1}^2, \qquad
& \tau_n^{n+1} = id,
\end{aligned}
$$
\end{prop}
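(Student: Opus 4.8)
The plan is to prove the statement in two stages: first extract the closed formula (\ref{cyclg}) from the implicit definition (\ref{tauc}), and then verify the cocyclic identities by direct substitution into that formula. Throughout I would work in the basis $\{e_g\}_{g\in G}$, using the structure maps of $C_0(G)$ recorded earlier: $\Delta(e_g)=\sum_{st=g}e_s\otimes e_t$, the orthogonality $e_se_h=\delta_{s,h}e_h$, the action of the modular multiplier $\sigma\,e_g=\sigma(g)e_g$, and the fact that, since $\delta=\varepsilon$, the twisted antipode reduces to the ordinary one, $S_{(\delta,\sigma)}(e_g)=e_{g^{-1}}$. The well-definedness of the left-hand side of (\ref{tauc}) as a multiplier I would simply take from Lemma~\ref{lemtau}, so that the task reduces to reading off its value.

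For the first stage I would evaluate both sides of (\ref{tauc}) on basis elements $a_i=e_{h_i}$, keeping in mind that on the left $\tau_n(F)$ is the multiplier being defined, whereas on the right $\tau_n$ is the concrete operator (\ref{taun}) applied to a genuine tensor. On the left, $\Delta^{n-1}\bigl(S_{(\delta,\sigma)}(e_{h_0})\bigr)(e_{h_1}\otimes\cdots\otimes e_{h_n})$ collapses, by orthogonality, to the single surviving term $e_{h_1}\otimes\cdots\otimes e_{h_n}$, present precisely when $h_0=(h_1\cdots h_n)^{-1}$; hence the left-hand side equals $\tau_n(F)(h_1,\dots,h_n)\,e_{h_1}\otimes\cdots\otimes e_{h_n}$. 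On the right, applying (\ref{taun}) to $F(e_{h_0}\otimes\cdots\otimes e_{h_{n-1}})$ produces a tensor whose last leg is $\sigma(t_n)e_{t_n}$ with $t_n=(h_0h_1\cdots h_{n-1})^{-1}$, and multiplying by $1\otimes\cdots\otimes 1\otimes e_{h_n}$ forces $t_n=h_n$, i.e.\ again $h_0=(h_1\cdots h_n)^{-1}$. Matching coefficients of $e_{h_1}\otimes\cdots\otimes e_{h_n}$ yields exactly $\tau_n(F)(g_1,\dots,g_n)=F\bigl((g_1\cdots g_n)^{-1},g_1,\dots,g_{n-1}\bigr)\sigma(g_n)$, which is (\ref{cyclg}).

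For the second stage I would recast $\tau_n$ geometrically. Setting $g_0:=(g_1\cdots g_n)^{-1}$, the augmented tuple $(g_0,g_1,\dots,g_n)$ has product $e$, and $\tau_n$ is the cyclic rotation $(g_0,g_1,\dots,g_n)\mapsto(g_n,g_0,\dots,g_{n-1})$ weighted by $\sigma$ of the current last genuine argument. This makes $\tau_n^{n+1}=\id$ transparent: after $n+1$ rotations the augmented tuple returns to itself, and the accumulated weight is $\sigma(g_n)\sigma(g_{n-1})\cdots\sigma(g_0)=\sigma(g_0g_1\cdots g_n)=\sigma(e)=1$, using that $\sigma$ is a multiplicative morphism. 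The coface relations $\delta_{p-1}\tau_n=\tau_{n+1}\delta_p$ ($1\le p\le n$), together with the boundary case in which $\tau_{n+1}\delta_0$ reproduces the top coface, then follow by substituting the explicit coface maps read off from (\ref{bdisg}) and simplifying group words such as $(g_1\cdots g_{n+1})^{-1}g_1=(g_2\cdots g_{n+1})^{-1}$. The codegeneracy relations follow analogously once one records that $\sigma_j$ inserts the neutral element $e$ in the $j$-th slot; the only one carrying genuine content is $\tau_n\sigma_0=\sigma_n\tau_{n+1}^2$, where evaluating both sides at $(g_1,\dots,g_n)$ gives $G\bigl(e,(g_1\cdots g_n)^{-1},g_1,\dots,g_{n-1}\bigr)\sigma(g_n)$, the cancellation of the spurious $\sigma(e)$ being again exactly $\sigma(e)=1$.

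I expect the genuine obstacle to be confined to the first stage, namely ensuring that the implicit prescription (\ref{tauc}) determines $\tau_n(F)$ unambiguously as an element of the multiplier algebra — independently of the auxiliary elements $a_0,\dots,a_n$ and of the chosen decomposition $a_0\otimes a_1=\sum_j\Delta(r_j)(s_j\otimes 1)$. This is precisely the abstract content of Lemma~\ref{lemtau}, which I would invoke rather than reprove, the key input being the regularity of $H$ (existence of local units), exactly as in the consistency argument following the definition of the twisted antipode. Once the closed formula (\ref{cyclg}) is secured, the verification of the cocyclic identities is a purely combinatorial manipulation of group words and of the multiplicative function $\sigma$, requiring no further analytic input.
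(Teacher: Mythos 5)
Your proposal is correct and takes essentially the same route as the paper: the closed formula (\ref{cyclg}) is extracted by evaluating (\ref{tauc}) on basis elements $e_g$, using the identity $\bigl(\Delta^{n-1}S\,e_{(g_1\cdots g_n)^{-1}}\bigr)(e_{g_1}\otimes\cdots\otimes e_{g_n})=e_{g_1}\otimes\cdots\otimes e_{g_n}$, with well-definedness delegated to Lemma~\ref{lemtau}, and the only identity verified in detail is $\tau_n^{n+1}=\mathrm{id}$ via multiplicativity of $\sigma$. Your augmented-tuple rotation picture is merely a cleaner packaging of the paper's explicit iteration of $\tau_n$, which tracks the same cyclic permutation of $\bigl((g_1\cdots g_n)^{-1},g_1,\ldots,g_n\bigr)$ and the same accumulated weight $\sigma(g_0)\sigma(g_1)\cdots\sigma(g_n)=\sigma(e)=1$.
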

\begin{proof}
{ 
The formula (\ref{cyclg}) is a straightforward implementation of (\ref{tauc}). Let us
compute it explicitly, using the following identity:
$$ e_{g_1} \otimes e_{g_2} \cdots e_{g_n} 
= \bigl( \Delta^{n-1} S e_{(g_1 g_2 \cdots g_n)^{-1}} \bigr)  \bigl( e_{g_1} \otimes
 e_{g_2} \cdots \otimes e_{g_n} \bigr).
$$
Then,
\begin{equation*}
(\tau_n F)\bigl( e_{g_1} \otimes e_{g_2} \otimes \cdots \otimes e_{g_n}  \bigr) = \tau_n \bigl( F(e_{(g_1 g_2 \cdots g_n)^{-1}} \otimes e_{g_1} \otimes \cdots \otimes e_{g_{n-1}}) \bigr) 
(1 \otimes 1 \otimes \cdots \otimes e_{g_n}).
 \end{equation*}
Evaluating this expression on $g_1 \otimes \ldots \otimes g_n$, we get
\begin{equation*}
\begin{split}
(\tau_n F)&(g_1 \otimes \ldots \otimes g_n) \\
&=  \tau_n \bigl( F(e_{(g_1 g_2 \cdots g_n)^{-1}} \otimes e_{g_1} \otimes \cdots \otimes e_{g_{n-1}}) \bigr)(g_1 \otimes \ldots \otimes g_n)\\
&= \bigl( F(e_{(g_1 g_2 \cdots g_n)^{-1}} \otimes e_{g_1} \otimes \cdots \otimes e_{g_{n-1}}) \bigr)((g_1 \ldots g_n)^{-1} \otimes g_1 \otimes \ldots \otimes g_{n-1}) \sigma(g_n)\\
&= F((g_1 \ldots g_n)^{-1}, g_1, \ldots, g_{n-1}) \sigma(g_n).
\end{split}
\end{equation*}
}
 
The only
nontrivial identity of the relations above is the cyclicity of $\tau_n$, which we
prove explicitly by direct computation,
$$
\begin{aligned}
\bigl( (\tau_n)^k & F \bigr)(g_1,\ldots,g_{n}) = 
\bigl( (\tau_n)^{k-1} F \bigr)(\bigl((g_1 \cdots g_{n})^{-1}, g_1,\ldots, g_{n-1}\bigr)
\sigma(g_{n}) \\
&= 	\bigl( (\tau_n)^{k-2} F \bigr)(\bigl(g_n, (g_1 \cdots g_{n})^{-1}, g_2,\ldots, g_{n-2}\bigr)
\sigma(g_{n-1}) \sigma(g_{n}) \\
& = \ldots \\
& = F(g_{n-k+2}, g_{g-k+3}, \ldots, (g_1 \cdots g_{n})^{-1}, \ldots, g_{n-k})
\sigma(g_{n-k+1}) \cdots \sigma(g_{n}),
\end{aligned}
$$
for $k=1,\ldots n-1$. For $k=n$ we have:
$$ \bigl( (\tau_n)^n F \bigr)(g_1,\ldots,g_{n}) = 
 F(g_{2}, g_{3}, \ldots, g_n, (g_1 \cdots g_{n})^{-1})
\sigma(g_{1}) \cdots \sigma(g_{n}),
$$
and it is easy to see that $(\tau_n)^{n+1} = \hbox{id}$.
\end{proof}

As a consequence we may restrict the coboundary operator to the subcomplex
of cyclic cochains in the cosimplicial complex. Therefore we obtain,
\begin{lem}
Let $\{M(C(G))^n, \delta_i, \sigma_j\}$ be the cosimplicial module of  Lemma 4.1, then with the above defined $\tau_n$ it becomes a cocyclic module and we can restrict the coboundary map to $M(C(G)^n_\tau$, which are cochains $z$ that satisfy
$(-1)^n \tau_n(x) = x$.  
\end{lem}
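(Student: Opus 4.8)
The plan is to verify the standard cocyclic (Connes) relations connecting the cyclicity operator $\tau_n$ with the coface maps $\delta_i$ and codegeneracy maps $\sigma_j$, since these are precisely what is needed to promote the cosimplicial module of Lemma 4.1 into a cocyclic module; once that structure is in place, the restriction of the coboundary to the $\tau$-invariant cochains is purely formal. Concretely, I would take as given the explicit formula \eqref{cyclg} for $\tau_n$ on $M(C(G))^n$ from Proposition \ref{taudis}, together with the explicit group-cohomology form of the coface maps read off from \eqref{bdisg}, namely $\delta_0 F(g_1,\ldots,g_{n+1}) = F(g_2,\ldots,g_{n+1})$, $\delta_i F = F(g_1,\ldots,g_i g_{i+1},\ldots,g_{n+1})$ for $1 \le i \le n$, and $\delta_{n+1} F = F(g_1,\ldots,g_n)\sigma(g_{n+1})$, and similarly the codegeneracies $\sigma_j$ which insert the identity element of $G$ in the appropriate slot (coming from the counit $\varepsilon(e_h)=1$).

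The bulk of the argument is a bookkeeping verification: for each of the relations listed in Proposition \ref{taudis} — namely $\delta_{p-1}\tau_n = \tau_{n+1}\delta_p$ for $1 \le p \le n$, the boundary case $\tau_{n+1}\delta_0 = \delta_n$, the degeneracy relations $\sigma_{p-1}\tau_n = \tau_{n+1}\sigma_p$ and $\tau_n\sigma_0 = \sigma_n\tau_{n+1}^2$, and finally $\tau_n^{n+1}=\mathrm{id}$ — I would evaluate both sides on an arbitrary tuple $(g_1,\ldots,g_{n+1})$ (or $(g_1,\ldots,g_{n-1})$ in the degeneracy cases) and check pointwise equality of the resulting expressions, tracking carefully how the cyclic shift $g_i \mapsto (g_1\cdots g_n)^{-1}$ interacts with the multiplication of adjacent arguments and with the modular weights $\sigma(g_i)$. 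The multiplicativity of $\sigma$ is what makes the $\sigma$-factors reorganize correctly across the cyclic permutation, exactly as in the computation of $\tau_n^{n+1}=\mathrm{id}$ already carried out in the proof of Proposition \ref{taudis}. Since $\tau_n^{n+1}=\mathrm{id}$ and the compatibility relations are the defining axioms of Connes' cyclic category, these checks establish that $\{M(C(G))^n,\delta_i,\sigma_j,\tau_n\}$ is a cocyclic module.

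Given the cocyclic structure, the final assertion is immediate: one defines the cyclic operator on $M(C(G))^n$ by $x \mapsto (-1)^n\tau_n(x)$ and sets $M(C(G))^n_\tau$ to be its fixed-point subspace, i.e. the cochains with $(-1)^n\tau_n(x)=x$. The key observation is that the compatibility relations between $\tau_n$ and the $\delta_i$ force the Hochschild coboundary $b = \sum_i (-1)^i \delta_i$ to commute with the cyclic operator, so that $b$ maps $\tau$-invariant cochains to $\tau$-invariant cochains; this is the standard consequence that $(M^\bullet_\tau, b)$ is a subcomplex of $(M^\bullet, b)$. Thus the coboundary restricts as claimed.

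I expect the only genuine obstacle to be notational rather than conceptual: keeping the index shifts and the placement of the inverted product $(g_1\cdots g_n)^{-1}$ consistent across the many cases, and ensuring the modular factors $\sigma(g_j)$ land in the correct slots after applying $\tau_{n+1}$ to a face- or degeneracy-modified tuple. Because everything is governed by the explicit formula \eqref{cyclg} and the multiplicativity of $\sigma$, no new idea is required beyond the cyclicity computation already displayed in the proof of Proposition \ref{taudis}; the verification of $b$-invariance of the fixed subspace then follows formally from the cocyclic axioms, completing the proof.
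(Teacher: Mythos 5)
Your overall route is the same as the paper's: the paper gives no separate proof of this lemma at all, treating it as an immediate corollary of Proposition \ref{taudis}, which records exactly the compatibility relations you list and verifies $\tau_n^{n+1}=\id$ by the computation you cite. So your first two paragraphs (pointwise verification of the relations from (\ref{cyclg}) and (\ref{bdisg}), then invoking the cyclic-category axioms) reproduce the intended argument.

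However, your justification of the final claim --- that $b$ restricts to the invariant cochains --- rests on a false premise: the cocyclic relations do \emph{not} force $b=\sum_{i=0}^{n+1}(-1)^i\delta_i$ to commute with the (signed) cyclic operator $\lambda_n=(-1)^n\tau_n$, and in fact it does not commute. Already for $n=1$ one computes
\begin{align*}
\tau_2(bF)(g_1,g_2)&=F(g_1)\sigma(g_2)-F(g_2^{-1})\sigma(g_2)+F\bigl((g_1g_2)^{-1}\bigr)\sigma(g_1)\sigma(g_2),\\
b(\tau_1F)(g_1,g_2)&=F(g_2^{-1})\sigma(g_2)-F\bigl((g_1g_2)^{-1}\bigr)\sigma(g_1g_2)+F(g_1^{-1})\sigma(g_1)\sigma(g_2),
\end{align*}
and these fail to agree for generic $F$, with either choice of sign. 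The correct standard mechanism is the $b'$-trick: set $b'=\sum_{i=0}^{n}(-1)^i\delta_i$ (drop the last coface) and use the relations of the cyclic category in the form $\tau_{n+1}\delta_i=\delta_{i-1}\tau_n$ for $1\le i\le n+1$ together with $\tau_{n+1}\delta_0=\delta_{n+1}$; a one-line index shift then yields $\lambda_{n+1}b=(-1)^{n+1}\delta_{n+1}+b'\lambda_n$, hence the identity $(1-\lambda_{n+1})\,b=b'\,(1-\lambda_n)$. Given this identity, $\lambda_n x=x$ implies $(1-\lambda_{n+1})bx=b'(1-\lambda_n)x=0$, i.e.\ $bx$ is again cyclic, which is the restriction statement of the lemma. (As a sanity check, once $F(g^{-1})\sigma(g)=-F(g)$ the two displayed expressions above both collapse to $bF(g_1,g_2)$, consistent with the paper's remark that the image of $b$ on $1$-cochains is cyclic.) So the gap is localized and easily repaired: replace the commutation claim by the $(1-\lambda)b=b'(1-\lambda)$ identity; without that replacement the step ``$b$ maps $\tau$-invariant cochains to $\tau$-invariant cochains'' does not follow from what you wrote.
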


The resulting Hopf-cyclic cohomology of $C(G)$ is an interesting object which 
leave for future study. Let us finish this section with an important remark.

\begin{rem}
If $F$ is in $M_0^n(C(G))$, that is evaluated on arbitrary $n\!-\!1$ arguments,
it is a finite support function of the remaining argument then $\tau_n(F)$ is
not necessarily in $M_0^n(C(G))$.

To see the counterexample take $G=\Z$ and $F(m,n) = q(n) \delta_{-m,n}$ for
any function $q$. It certainly satisfies the assumptions, yet as we compute
$$ \tau_2(F)(m,n) = F(-m-n,m) \sigma(n) = q(-m-n) \delta_{m+n,m} \sigma(n),$$
we see that at  $m\!=\!0$ it is a function $q(-n) \sigma(0) = q(-n)$,  which is not 
finitely supported and not necessarily in the algebra generated by $C_0(G)$ and $\sigma$. 
\end{rem}

The above observation is very significant, as it demonstrates that the cyclicity
operator $\tau$ cannot be restricted to the minimal cosimplicial complex that we studied in the previous section, as it fails to be so in the simplest case of discrete groups.

\begin{rem}
The image of the coboundary $b$ in the space of 1-cochains is cyclic. Indeed, the
cyclicity condition for 1-cochains is $F(g^{-1}) \sigma(g)= - F(g)$ and the
function $f_c(g) = c(1-\sigma(g))$ satisfies it, since $\sigma$ is a group morphism.
\end{rem}

\section{Conclusions and open problems}

In this short note we have demonstrated that the extension of the modular pairs
in involution and the Connes-Moscovici Hopf-cyclic cohomology is possible for commutative multiplier Hopf algebras with the cocyclic object based on the space 
of all bounded functions. We provide a counterexample showing that the restricted multiplier cannot be invariant under the cyclicity operator. The question, whether 
similar construction is possible for arbitrary regular multiplier Hopf algebras is still 
an open problem.

The definition of the modular pairs of involution for the algebra of functions over 
discrete groups and the related cohomology groups leads to the problem of relating
the presented cohomology theory to the already existing ones. In particular, it will be
interesting to compute the relevant cohomology for the examples of multiplier Hopf algebras as the one discussed in the example 3.3. We leave that for future work.

%%%%%%%%%%%%%%%%%%%%%%%%%%%%%%%%%%%%%%%%%%%%%

\end{document}